\newcommand{\R}{{\bf R}}
\newcommand{\eye}{\mbox{\bf I}}
\newtheorem{theorem}{Theorem}
\newtheorem{lemma}{Lemma}
\newtheorem{fact}{Fact}
\newtheorem{definition}{Definition}
\newcommand{\ubar}[1]{\underaccent{\bar}{#1}}
\newcommand{\edit}[1]{{\color{blue}#1}}
\renewcommand{\edit}[1]{#1}
\newcommand{\N}{\mathbf{N}}
\newcommand{\dmax}{\bar{d}_{\epsilon}}
\newcommand{\dmin}{\ubar{d}_{\epsilon}}
\newcommand{\sm}{S}
\newcommand{\SPTone}{\gamma_1}
\newcommand{\SPTtwo}{\gamma_2}
\newcommand{\SPTthree}{\gamma_3}
\newcommand{\diam}{\mathbf{diam}}
\newcommand{\NBH}{N}
\newcommand{\RequireSPT}{$\SPTone \in [0, 1)$, $\SPTtwo \in (1/\omega, 1]$, $\SPTthree \in (0,1]$}
\title{A consistently adaptive trust-region method}
\author{%
  Fadi Hamad \\
  Department of Industrial Engineering\\
  University of Pittsburgh\\
  Pittsburgh, PA 15261 \\
  \texttt{fah33@pitt.edu} \\
  % examples of more authors
   \And
   Oliver Hinder \\
    Department of Industrial Engineering\\
   University of Pittsburgh \\
   Pittsburgh, PA 15261 \\
   \texttt{ohinder@pitt.edu} \\
  % \AND
  % Coauthor \\
  % Affiliation \\
  % Address \\
  % \texttt{email} \\
  % \And
  % Coauthor \\
  % Affiliation \\
  % Address \\
  % \texttt{email} \\
  % \And
  % Coauthor \\
  % Affiliation \\
  % Address \\
  % \texttt{email} \\
}
\begin{document}

\maketitle

\begin{abstract}
\edit{
 Adaptive trust-region methods attempt to maintain strong convergence guarantees without depending on conservative estimates of problem properties such as Lipschitz constants. However, on close inspection, one can show existing adaptive trust-region methods have theoretical guarantees with severely suboptimal dependence on problem properties such as the Lipschitz constant of the Hessian. For example, TRACE developed by Curtis et al. obtains a $O(\Delta_f L^{3/2} \epsilon^{-3/2}) + \tilde{O}(1)$ iteration bound where $L$ is the Lipschitz constant of the Hessian. Compared with the optimal $O(\Delta_f L^{1/2} \epsilon^{-3/2})$ bound this is suboptimal with respect to $L$. We present the first adaptive trust-region method which circumvents this issue and requires at most $O( \Delta_f L^{1/2}  \epsilon^{-3/2}) + \tilde{O}(1)$ iterations to find an $\epsilon$-approximate stationary point, matching the optimal iteration bound up to an additive logarithmic term. Our method is a simple variant of a classic trust-region method and in our experiments performs competitively with both ARC and a classical trust-region method.}
\end{abstract} 

\section{Introduction}

Second-order methods are known to quickly and accurately solve sparse nonconvex optimization problems
that, for example, arise in optimal control \cite{betts2010practical}, truss design \cite{schittkowski1994numerical}, AC optimal power flow \cite{frank2012primer}, and PDE constrained optimization \cite{biegler2003large}. 
Recently, there has also been a large push to extend second-order methods to tackle machine learning problems by coupling them with carefully designed subproblem solvers \cite{kohler2017sub, zhou2018stochastic, lee2018distributed,curtis2021worst,curtis2020fully,curtis2019stochastic,gupta2021localnewton,xu2020newton,fang2020newton,wang2018giant,na2022hessian,xu2016sub, crane2020dino, zhang2022adaptive,kovalev2019stochastic}. 

Much of the early theory for second-order methods focused on showing fast local convergence and (eventual) global convergence \cite{gill1974newton,more1979use,sorensen1982newton,ulbrich2004superlinear,vicente2002local,byrd2000trust, chen2006interior, conn2000primal, gould2002interior, wachter2005line}. These proofs of global convergence, unsatisfactorily, rested on showing at each iteration second-order methods reduced the function value almost as much as gradient descent \cite[Theorem 4.5]{sorensen1982newton} \cite[Theorem 3.2.]{wright2006numerical}, this is despite the fact that in practice second-order methods require far fewer iterations. In 2006, Nesterov and Polyak \cite{nesterov2006cubic} partially resolved this inconsistency by introducing a new second-order method, cubic regularized Newton's method (CRN).
Their method can be used to find stationary points of multivariate and possibly nonconvex functions $f : \R^{n} \rightarrow \R$.
Their convergence results assumes the optimality gap
\[ 
\Delta_f := f(x_1) - \inf_{x \in \R^{n}} f(x)
\]
is finite and that the Hessian of $f$ is $L$-Lipschitz: 
\begin{flalign}
\| \grad^2 f(x) - \grad^2 f(x') \| \le L \| x - x' \| \quad \forall x, x' \in \R^{n} 
\end{flalign}
where $\| \cdot \|$ is the spectral norm for matricies and the Euclidean norm for vectors.
If $L$ is known they guarantee their algorithm terminates with an $\epsilon$-approximate stationary point:
\[ 
\| \grad f(x) \| \le \epsilon
\] 
after at most
\begin{flalign}\label{CRN:iteration-bound}
O(\Delta_f  L^{1/2} \epsilon^{-3/2})
\end{flalign}
iterations. For sufficiently small $\epsilon$, this improves on the classic guarantee that gradient descent terminates after at most $O(\Delta_f   \sm \epsilon^{-2})$ iterations for $\sm$-smooth functions, thereby partially resolving this inconsistency between theory and practice.
Bound \eqref{CRN:iteration-bound} is also known to be the best possible for second-order methods \cite{carmon2017loweri}.

However, CRN only achieves \eqref{CRN:iteration-bound} if the Lipschitz constant of the Hessian is known.
In practice, we rarely know the Lipschitz constant of the Hessian, and if we do it is likely to be a conservative estimate. With this in
mind, many authors have developed
practical algorithms that achieve the convergence guarantees of CRN without 
needing to know the Lipschitz constant of the Hessian. 
\edit{We list these adaptive second-order methods in Table~\ref{table:adaptive-second-order-methods} along with their worst-case iteration bounds.
}

\edit{
Despite the fact that all these algorithms match the $\epsilon$-dependence of $\eqref{CRN:iteration-bound}$, the majority of them are suboptimal due to the dependency on the Lipschitz constant $L$. For example, only our method and \cite{grapiglia2017regularized,cartis2019universal} are optimal in terms of $L$ scaling. Whereas \cite{cartis2011adaptiveII} is suboptimal as the bound scales proportional to $L ^ {3/2}$ instead of $L ^ {1/2}$. Moreover, all the trust-region methods have suboptimal $L$ scaling. In particular, inspection of these bounds shows scaling with respect to $L$ of $L ^ {3/2}$ for \cite{curtis2017trust}
and $L ^{2}$ for \cite{curtis2021trust}
instead of the optimal scaling of $L ^ {1/2}$. 
}

An ideal algorithm wouldn't incur this cost for adaptivity. This motivates the following definition.

\edit{ 
\begin{definition}
A method is \textbf{consistently adaptive} on a problem class if, without
 knowing problem parameters, it achieves
the same worst-case complexity bound as one obtains if problem parameters were known, up to a \textbf{problem-independent} constant-factor and
additive polylogarithmic term.
\end{definition}
}

\edit{
 Clearly, based on our above discussion there does not exist consistently adaptive trust-region methods. Indeed, despite the extensive literature on trust-region methods \cite{curtis2020fully,curtis2019stochastic,sorensen1982newton,byrd2000trust,conn2000primal,curtis2017trust,conn2000trust,powell1984global,carter1991global,powell2003trust} and their worst-case iterations bounds \cite{curtis2017trust,curtis2021trust,curtis2018concise}, none of these methods are consistently adaptive. 
  As we mentioned earlier and according to Table \ref{table:adaptive-second-order-methods}, \cite{grapiglia2017regularized,cartis2019universal} are cubic regularization based methods which scale optimally with respect to the problem parameters. However, they are not quite consistently adaptive because $\sigma_0$ appears outside the additive polylogarithmic term.

    \begin{table}[!thb]
    \caption{Adaptive second-order methods along with their worst-case  bounds on the number of gradient, function and Hessian evaluations. $\sigma_{\min}\in (0,\infty)$ is the smallest regularization parameter used by ARC \cite{cartis2011adaptiveII}.
    $\sigma_0 \in (0,\infty)$ is the initial regularization parameter for cubic regularized methods.
    }
    \label{table:adaptive-second-order-methods}
    \centering
    {\renewcommand{\arraystretch}{1.3}
    \begin{tabular}{lll}
            \hline
            \textbf{Algorithm} & \textbf{type} & \textbf{worst-case iterations bound} \\\midrule
            \textbf{ARC \cite{cartis2011adaptiveII}} \tablefootnote{
	Obtaining this bound does require carefully inspection of 
	Cartis, Gould and Toint \cite{cartis2011adaptiveII} (who highlighted only on the $\epsilon$-dependence of their bound).
	For simplicity of discussion we assume the ARC subproblems are solved exactly (i.e., $C=0$, $\kappa_\theta = 0$), and that the initial regularization parameter satisfies $\sigma_0 = O(L + \sigma_{\min})$ (the bound only gets worse otherwise). We also consider only the bound on the number of Hessian evaluations, inclusion of the unsuccessful iterations (where cubic regularized subproblems are still solved) makes this bound even worse. Finally, we  ignore problem-independent parameters $\gamma_1, \gamma_2, \gamma_3$, and $ \eta_1$.} & cubic regularized & $O(\Delta_f  L^{3/2} \sigma_{\min}^{-1} \epsilon^{-3/2} + \Delta_f \sigma_{\min}^{1/2} \epsilon^{-3/2})$ \\
            \textbf{Nesterov et al. \cite[Eq. 5.13 and 5.14]{grapiglia2017regularized}} \tablefootnote{Since by our assumption the function $f$ has L-Lipschitz Hessian, we only consider the case when the H\"{o}lder exponent $\nu = 1$. Note also that the algorithm description \cite[Eq. 5.12]{grapiglia2017regularized}, requires that the initial regularization parameter $\sigma_0$ ($H_0$ using their notation) satisfies $H_0 \in (0, H_f(v)]$ where $H_f(v)$ is defined in \cite[Eq. 2.1]{grapiglia2017regularized}. Technically this condition is not verifiable as $H_f(v)$ is unknown in practice. However, one can readily modify \cite{grapiglia2017regularized} by redefining $H_f(v)$ to be the maximum of $H_0$ and the RHS of \cite[Eq. 2.1]{grapiglia2017regularized} to remove the requirement that $H_0 \in (0, H_f(v)]$. This gives the bound stated in Table~\ref{table:adaptive-second-order-methods}.} & cubic regularized & $O(\Delta_f  \max\{L, \sigma_0 \}^{1/2} \epsilon ^ {-3/2}) + \tilde{O}(1)$ \\
            \textbf{ARp \cite[Section 4.1]{cartis2019universal}} \tablefootnote{We only consider the case for the cubic regularized model when $p = 2$ and $r = p + 1 = 3$. Also, since by our assumption the function $f$ has L-Lipschitz Hessian, we only consider the case when the H\"{o}lder exponent $\beta_2 = 1$.} & cubic regularized &  $O(\Delta_f \max\{L, \sigma_0 \}^{1/2} \epsilon^{-3/2}) + \tilde{O}(1)$\\
            \textbf{TRACE \cite[Section 3.2]{curtis2017trust}} & trust-region & $O(\Delta_f L^{3/2} \epsilon^{-3/2}) + \tilde{O}(1)$\\
            \textbf{Toint et al. \cite[Section 2.2]{curtis2021trust}} & trust-region & $\tilde{O}\left( \Delta_f \max{\left\{ L^2, 1 + 2 L \right\}} \epsilon^{-3/2} \right)$\\
            \textbf{Our method} & trust-region & $O( \Delta_f L^{1/2}  \epsilon^{-3/2}) + \tilde{O}(1)$
            \end{tabular}
            }
\end{table}
}
 
 \textbf{Our contributions:}
 \begin{enumerate}
\item  We present the first consistently adaptive trust-region  method for finding stationary points of nonconvex functions with $L$-Lipschitz Hessians and bounded optimality gap. In particular, we prove our method finds an $\epsilon$-approximate stationary point after at most $O( \Delta_f L^{1/2}  \epsilon^{-3/2}) + \tilde{O}(1)$ iterations.
\item We show our trust-region method has quadratic convergence when 
it enters a region around a point satisfying the second-order sufficient conditions for local optimality.
\item Our method appears promising in experiments. We test our method on the CUTEst test set \cite{gould2015cutest} against other methods including ARC and a classic trust-region  method. These tests show how competitive we are against the other methods in term of total number of required iterations until convergence.  
 \end{enumerate}
 
 \paragraph{Paper outline} The paper is structured as follows.
 Section~\ref{sec:our-tr-method} presents our trust-region
 method and contrasts it with existing trust-region methods. Section~\ref{sec:main-result} presents our main result: a convergence bound for finding
 $\epsilon$-approximate stationary points that is 
 consistently adaptive to problems
 with Lipschitz continuous Hessian. Section~\ref{sec:superlinear-results}
 shows quadratic convergence of the method.
 Section~\ref{sec:experimental-results} discusses the experimental results.
 
 \paragraph{Notation} Let 
 $\N$ be the set of natural numbers (starting from one), $\eye$ be the identity matrix,
 and $\R$ the set of real numbers.
 Throughout this paper
 we assume that $n \in \N$ and $f : \R^{n} \rightarrow \R$ is bounded below and twice-differentiable.
 We define $f_\star := \inf_{x \in \R^{n}} f(x)$ and $\Delta_f := f(x_1) - f_\star$.

\section{Our trust-region method}\label{sec:our-tr-method}
\newcommand{\Mk}{M_k} % second-order taylor series model

\subsection{Trust-region subproblems}

As is standard for trust-region methods \cite{sorensen1982newton} at each iteration $k$ of our algorithm we
build a second-order Taylor series approximation at
the current iterate $x_k$:
\begin{flalign}
    \Mk(d) := \frac{1}{2} d^T \grad^2 f(x_k) d + \grad f(x_k)^T d 
    \label{eq:second-order-model}
\end{flalign}
and minimize that approximation over a ball with radius $r_k > 0$:
\begin{flalign}\label{eq:trust-region-subproblem}
\min_{d \in \R^{n}} \Mk(d) \text{ s.t. } \| d \| \le r_k
\end{flalign}
to generate a search direction $d_k$.
One important practical question is given a candidate search direction $d_k$, how can we verify that it solves 
\eqref{eq:trust-region-subproblem}.
For this one can use the following well-known Fact.
\begin{fact}[Theorem 4.1 \cite{nocedal2006numerical}] \label{fact:optimality-conditions-quadratic-model}
The direction $d_k$ exactly solves \eqref{eq:trust-region-subproblem} if and only
there exists $\delta_k \in [0,\infty)$ such that:
\begin{subequations}\label{eq:optimality-conditions-tr}
\begin{flalign}
\label{eq:model-gradient1}\grad \Mk(d_k) + \delta_k d_k &= 0 \\
\delta_k r_k &\le \delta_k \| d_k \|  \\
% \label{eq:comp-delta-radius1} \\
\| d_k \| &\le r_k \\
\grad^2 f(x_k) + \delta_k \eye &\succeq 0 
\label{eq:PSD-grad-squared-f-delta}
\end{flalign}
\end{subequations}
which solves \eqref{eq:trust-region-subproblem}.
\end{fact}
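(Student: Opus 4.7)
The plan is to prove the two directions separately: the sufficiency (``if'') direction via a Lagrangian convexification, and the necessity (``only if'') direction by splitting into interior and boundary cases, with a direct spherical-comparison argument for the hardest part.

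For sufficiency, I would define the Lagrangian
\[
L(d) := \Mk(d) + \tfrac{\delta_k}{2}\bigl(\|d\|^2 - r_k^2\bigr).
\]
Condition \eqref{eq:PSD-grad-squared-f-delta} gives $\grad^2 L(d) = \grad^2 f(x_k) + \delta_k \eye \succeq 0$, so $L$ is convex, while \eqref{eq:model-gradient1} gives $\grad L(d_k) = \grad \Mk(d_k) + \delta_k d_k = 0$, so $d_k$ is the global minimizer of $L$. For any feasible $d$ (with $\|d\|\le r_k$), nonnegativity of $\delta_k$ and complementary slackness yield
\[
\Mk(d) \ge \Mk(d) + \tfrac{\delta_k}{2}\bigl(\|d\|^2 - r_k^2\bigr) = L(d) \ge L(d_k) = \Mk(d_k),
\]
which shows $d_k$ solves \eqref{eq:trust-region-subproblem}.

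For necessity, I would split on whether the constraint is active. If $\|d_k\| < r_k$, the constraint is inactive and $d_k$ is an unconstrained local minimizer of $\Mk$; standard first- and second-order necessary conditions give $\grad \Mk(d_k) = 0$ and $\grad^2 f(x_k) \succeq 0$, and $\delta_k := 0$ satisfies every part of \eqref{eq:optimality-conditions-tr}. If $\|d_k\| = r_k$, the constraint gradient $2 d_k$ is nonzero, so LICQ holds; classical KKT theory then gives a multiplier $\delta_k \ge 0$ with stationarity \eqref{eq:model-gradient1}, and complementary slackness is automatic since the constraint is active.

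The main obstacle is establishing the semidefiniteness condition \eqref{eq:PSD-grad-squared-f-delta} in the boundary case, which does not follow from standard KKT. My approach is a direct comparison with other points on the sphere: for any $\tilde d$ with $\|\tilde d\| = r_k$, set $w := \tilde d - d_k$; expanding $\|d_k + w\|^2 = \|d_k\|^2$ gives $d_k^T w = -\|w\|^2/2$. Using $\grad \Mk(d_k) = -\delta_k d_k$ from the stationarity condition, one computes
\[
0 \le \Mk(\tilde d) - \Mk(d_k) = -\delta_k\, d_k^T w + \tfrac{1}{2} w^T \grad^2 f(x_k)\, w = \tfrac{1}{2} w^T \bigl(\grad^2 f(x_k) + \delta_k \eye\bigr) w.
\]
For any direction $u$ with $d_k^T u \ne 0$, the choice $w := -2 (d_k^T u / \|u\|^2)\, u$ satisfies $\|d_k + w\| = r_k$ and is a nonzero multiple of $u$, so the inequality above yields $u^T(\grad^2 f(x_k) + \delta_k \eye)\, u \ge 0$. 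The remaining case $d_k^T u = 0$ then follows by continuity after perturbing $u$ by a small multiple of $d_k$. Together these cover all $u \in \reals^n$, completing the argument.
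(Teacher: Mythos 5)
Your proof is correct. Note, however, that the paper does not prove this statement at all: it is stated as a \emph{Fact} and attributed to Theorem~4.1 of Nocedal and Wright, so there is no in-paper argument to compare against. What you have written is essentially the classical proof from that reference — the Lagrangian convexification $L(d)=\Mk(d)+\tfrac{\delta_k}{2}(\|d\|^2-r_k^2)$ for sufficiency (using that the paper's condition $\delta_k r_k\le\delta_k\|d_k\|$ together with $\|d_k\|\le r_k$ is exactly complementary slackness, so $L(d_k)=\Mk(d_k)$), and for necessity the interior/boundary split with the sphere-comparison construction $w=-2(d_k^Tu/\|u\|^2)u$ plus a limiting argument for $d_k^Tu=0$ to obtain $\grad^2 f(x_k)+\delta_k\eye\succeq 0$. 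All steps check out (the boundary case implicitly uses $r_k>0$ so that $d_k\neq 0$ and LICQ holds, which the paper guarantees), so this would serve as a self-contained proof of the cited Fact.
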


In practice, it is not possible to exactly solve the trust-region subproblem defined in \eqref{eq:trust-region-subproblem}, instead we only require that the trust-region subproblem is approximately solved. For our method, it will suffice to find a direction $d_k$ satisfying:
\begin{subequations}
\begin{flalign}
\label{eq:model-gradient-weaker} \| \grad \Mk(d_k) + \delta_k d_k \| &\le \SPTone \| \grad f(x_k + d_k) \| \\
\label{eq:comp-delta-radius1} \SPTtwo \delta_k r_k  &\le \delta_k \| d_k \| \\
\| d_k \| &\le r_k  \\
\label{eq:model-reduction-by-dist1} \Mk(d_k) &\le -\SPTthree \frac{\delta_k}{2} \| d_k \|^2
\end{flalign}
\label{eq:subproblem-termination-criteria}
\end{subequations}
\hspace{-0.2cm} where $\delta_k$ denotes the solution for the above system and \RequireSPT.
Setting $\SPTone = 0, \SPTtwo = 1, \SPTthree = 1$ represents
the exact version of these conditions.
As Lemma~\ref{lem:exact-tr-sol} shows, exactly solving the trust-region subproblem gives a solution to the system \eqref{eq:subproblem-termination-criteria}.
However, the converse it not true, an exact solution to \eqref{eq:subproblem-termination-criteria} does not necessarily solve the trust-region subproblem.
Nonetheless, these conditions are all we need
to prove our results, and are easier to
verify than a relaxation of \eqref{eq:trust-region-subproblem} that includes a requirement like 
 \eqref{eq:PSD-grad-squared-f-delta} which needs
a computationally expensive eigenvalue calculation.

\begin{lemma}\label{lem:exact-tr-sol}
Any solution to \eqref{eq:optimality-conditions-tr} is a solution to \eqref{eq:subproblem-termination-criteria} with $\SPTone = 0, \SPTtwo = 1, \SPTthree = 1$.
\end{lemma}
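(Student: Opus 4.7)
The plan is to check the four conditions of \eqref{eq:subproblem-termination-criteria} one at a time, using the four conditions of \eqref{eq:optimality-conditions-tr}. With $\SPTone = 0$, the first condition of \eqref{eq:subproblem-termination-criteria} asks that $\grad \Mk(d_k) + \delta_k d_k = 0$, which is exactly \eqref{eq:model-gradient1}. With $\SPTtwo = 1$, the second condition is literally the same as the second condition of \eqref{eq:optimality-conditions-tr}, and the third condition ($\|d_k\| \le r_k$) matches directly as well. So the only work is in verifying the model-reduction inequality $\Mk(d_k) \le -\frac{\delta_k}{2}\|d_k\|^2$.

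For that step, the plan is to use the stationarity condition \eqref{eq:model-gradient1} to eliminate $\grad f(x_k)$ from the definition \eqref{eq:second-order-model} of $\Mk$. Specifically, \eqref{eq:model-gradient1} gives $\grad f(x_k) = -\grad^2 f(x_k) d_k - \delta_k d_k$, and substituting this into $\Mk(d_k) = \frac{1}{2} d_k^\top \grad^2 f(x_k) d_k + \grad f(x_k)^\top d_k$ and simplifying yields
\begin{flalign*}
\Mk(d_k) = -\tfrac{1}{2} d_k^\top \grad^2 f(x_k) d_k - \delta_k \|d_k\|^2.
\end{flalign*}

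The final ingredient is the semidefiniteness condition \eqref{eq:PSD-grad-squared-f-delta}, which implies $d_k^\top \grad^2 f(x_k) d_k \ge -\delta_k \|d_k\|^2$, and hence $-\tfrac{1}{2} d_k^\top \grad^2 f(x_k) d_k \le \tfrac{\delta_k}{2}\|d_k\|^2$. Plugging this into the displayed equality gives $\Mk(d_k) \le -\tfrac{\delta_k}{2}\|d_k\|^2$, which is condition \eqref{eq:model-reduction-by-dist1} with $\SPTthree = 1$, completing the proof.

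I do not expect any real obstacle here: the argument is a short algebraic manipulation tying together the stationarity condition and the second-order condition. The only mildly nonobvious step is recognizing that eliminating $\grad f(x_k)$ via \eqref{eq:model-gradient1} converts the model value into a quadratic form in $\grad^2 f(x_k) + \delta_k \eye$, which the semidefiniteness condition then controls.
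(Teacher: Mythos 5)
Your proof is correct and follows essentially the same route as the paper: the paper likewise dismisses the first three conditions as immediate and establishes \eqref{eq:model-reduction-by-dist1} by using \eqref{eq:model-gradient1} to rewrite $\Mk(d_k) = -\tfrac{1}{2} d_k^\top (\grad^2 f(x_k) + 2\delta_k \eye) d_k$ (identical to your expression) and then invoking \eqref{eq:PSD-grad-squared-f-delta}. No gaps.
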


\begin{proof}
The only tricky part is proving \eqref{eq:model-reduction-by-dist1}. However, this can be shown using standard arguments:
$M_k(d_k) = \frac{1}{2} d_k^T \grad^2 f(x_k) d_k + \grad f(x_k)^T d_k = -\frac{1}{2} d_k^T (\grad^2 f(x_k) + 2 \delta_k \eye) d_k \le -\frac{\delta_k}{2} \| d_k \|^2$
where the second equality uses \eqref{eq:model-gradient1} and the inequality \eqref{eq:PSD-grad-squared-f-delta}. 
\end{proof}

\subsection{Our trust-region method}\label{sec:our-trust-region-method}

An important component of a trust-region method is the decision for computing the radius $r_k$ at each iteration. This choice is based on whether the observed function value reduction $f(x_k) - f(x_k + d_k)$ is comparable to the predicted reduction from the second-order Taylor series expansion $\Mk$. In particular, given a search direction $d_k$ existing trust-region methods compute the ratio
\begin{flalign}\label{eq:classic-rho-k}
\rho_k := \frac{f(x_k) - f(x_k + d_k)}{-\Mk(d_k)}
\end{flalign}
and then increase $r_k$ if $\rho_k \ge \beta$ or decrease $r_k$ if $\rho_k < \beta$ \cite{sorensen1982newton}. Unfortunately, while intuitive,
this criteria is provably bad, in the sense that one can construct examples of functions with Lipschitz continuous Hessians where any trust-region method that uses this criteria will have a convergence rate proportional to $\epsilon^{-2}$ \cite[Section~3]{cartis2010complexity}.

Instead of (\ref{eq:classic-rho-k}), we introduce a variant of this ratio by adding the term $\frac{\theta}{2} \| \grad f(x_k + d_k) \| \| d_k \|$ to the predicted reduction where
$\theta \in (0,\infty)$ is a problem-independent hyperparameter (we use $\theta = 0.1$ in our implementation). 
This requires the algorithm to reduce the function value more if the gradient norm at the candidate solution $x_{k} + d_k$, and search direction norm are big.
In particular, we define our new ratio as: 
\begin{flalign}\label{eq:compute-rho-k}
\hat{\rho}_k := \frac{f(x_k) - f(x_k + d_k)}{-\Mk(d_k) + \frac{\theta}{2} \| \grad f(x_k + d_k) \| \| d_k \| } 
\end{flalign}
Our trust-region method is presented in Algorithm~\ref{alg:SOAT}. The algorithm includes some other minor modification of classic trust-region
methods \cite{sorensen1982newton}: we accept all search directions that
reduce the function value, and update the $r_{k+1}$ using $\| d_k \|$ instead of $r_k$ \edit{(see \cite[Equation~13.6.13]{sun2006optimization} for a similar update rule)}.
We recommend contrasting our algorithm with \cite[Algorithm 1]{curtis2017trust} which is trust-region method with an iteration bound proportional to $\epsilon^{-3/2}$ but
is more complex and not consistently adaptive.

For the remainder of this paper
$x_k$ and $d_k$ refer to the iterates of Algorithm~\ref{alg:SOAT}.

\begin{algorithm}[tbh]
\caption{\textbf{C}onsistently  \textbf{A}daptive \textbf{T}rust Region Method (CAT)}\label{alg:SOAT}
\textbf{Input requirements:} $r_{1} \in (0,\infty)$, $x_1 \in \R^{n}$ \;
\textbf{Problem-independent parameter requirements:} $\theta \in (0,1), \beta \in (0,1)$, $\omega \in (1,\infty)$, \RequireSPT, $\frac{\beta \theta}{\SPTthree(1 - \beta)} + \SPTone < 1$ \;
\For{$k = 1, \dots, \infty$}{
Approximately solve the trust-region subproblem, i.e., obtain $d_k$ that satisfies \eqref{eq:subproblem-termination-criteria}  \; 
 $x_{k + 1} \gets 
    \begin{cases}
    x_k + d_k & \text{ } f(x_k + d_k) \le f(x_k) \\
    x_k & \text{ otherwise}
    \end{cases}$
    
    $r_{k + 1} \gets 
    \begin{cases}
        \omega \| d_k \| & \text{ } \hat{\rho}_k \ge \beta \\
        \| d_k \| / \omega & \text{ otherwise}
    \end{cases}$
}
\end{algorithm}

\section{Proof of full adaptivity on Lipschitz continuous functions}\label{sec:main-result}

This section proves that our method is consistently adaptive for finding approximate stationary points on functions with $L$-Lipschitz Hessians. 
The core idea behind our proof
is to get a handle on the size of $\| d_k \|$.
In particular, if we can bound $\| d_k \|$ from below
and $\hat{\rho}_k \ge \beta$ then the $\frac{\theta}{2} \| \grad f(x_k + d_k) \| \| d_k \|$
term guarantees that at iteration $k$ the function value
is reduced by a large amount relative to the gradient norm $\| \grad f(x_k + d_k) \|$.

Lemma~\ref{lem:gradient-bound_distance} 
guarantees the norm of the gradient for the candidate solution $x_k + d_k$ lower bounds
the size of $\| d_k \|$ under certain conditions.
Note this bound on the gradient, i.e., \eqref{eq:grad-bound-distance1} holds without us needing to know the Lipschitz constant of the Hessian $L$. The proof of Lemma~\ref{lem:gradient-bound_distance} appears in Section~\ref{sec:lem:gradient-bound_distance} and heavily leverages Fact~\ref{fact:TR-method-accuracy}.

\begin{fact}[Nesterov \& Polyak 2006, Lemma 1 \cite{nesterov2006cubic}]\label{fact:TR-method-accuracy}
If $\grad^2 f$ is $L$-Lipschitz,
\begin{flalign}
\| \grad f(x_k+d_k) \| &\le \| \grad \Mk(d_k) \| + \frac{L}{2} \| d_k \|^2  \label{eq:Lip-grad1}\\
f(x_k + d_k) &\le f(x_k) + \Mk(d_k) + \frac{L}{6} \| d_k \|^3. \label{eq:Lip-progress1}
\end{flalign}
\end{fact}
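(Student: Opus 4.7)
The plan is to prove both inequalities directly from the fundamental theorem of calculus combined with the Lipschitz bound on $\grad^2 f$, which is the classical route used by Nesterov and Polyak. I would not need any of the trust-region machinery for this; the fact is purely a consequence of smoothness of $f$ along the segment from $x_k$ to $x_k + d_k$.

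For the first inequality, I would start from
\[
\grad f(x_k + d_k) - \grad f(x_k) = \int_0^1 \grad^2 f(x_k + t d_k)\, d_k \, dt,
\]
and subtract $\grad^2 f(x_k) d_k$ from both sides. Since $\grad \Mk(d_k) = \grad f(x_k) + \grad^2 f(x_k) d_k$, the left-hand side becomes $\grad f(x_k + d_k) - \grad \Mk(d_k)$, while the right-hand side becomes $\int_0^1 [\grad^2 f(x_k + t d_k) - \grad^2 f(x_k)] d_k\, dt$. Taking norms, applying the triangle inequality under the integral, and using $L$-Lipschitzness of $\grad^2 f$ gives an integrand bounded by $L t \|d_k\|^2$, which integrates to $\tfrac{L}{2}\|d_k\|^2$. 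A final application of the triangle inequality yields \eqref{eq:Lip-grad1}.

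For the second inequality, I would use the integral form of Taylor's theorem with second-order remainder:
\[
f(x_k + d_k) = f(x_k) + \grad f(x_k)^T d_k + \int_0^1 (1-t)\, d_k^T \grad^2 f(x_k + t d_k)\, d_k \, dt.
\]
Subtracting $\Mk(d_k) = \grad f(x_k)^T d_k + \tfrac{1}{2} d_k^T \grad^2 f(x_k) d_k$ and noting that $\int_0^1 (1-t)\, dt = \tfrac{1}{2}$, the remaining quantity is
\[
\int_0^1 (1-t)\, d_k^T \bigl[\grad^2 f(x_k + t d_k) - \grad^2 f(x_k)\bigr] d_k \, dt.
\]
Bounding with the Lipschitz property produces an integrand at most $L t (1-t) \|d_k\|^3$, and $\int_0^1 t(1-t)\, dt = \tfrac{1}{6}$, giving \eqref{eq:Lip-progress1}.

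The only mildly delicate step is remembering the correct form of the second-order Taylor remainder with the $(1-t)$ weight (which arises either from integration by parts or from iterating the fundamental theorem of calculus twice), since a naive writing without this weight would produce the wrong constant. Other than this bookkeeping, both parts are routine once the Hessian difference is isolated inside the integral and $L$-Lipschitzness is applied pointwise in $t$.
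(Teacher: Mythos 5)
Your proof is correct and is exactly the standard argument behind this fact: the paper itself offers no proof (it simply cites Lemma 1 of Nesterov and Polyak), and that cited lemma is proved precisely by the integral-remainder Taylor expansions you use, with the $t$-weight giving $\tfrac{L}{2}\|d_k\|^2$ for the gradient bound and the $t(1-t)$-weight giving $\tfrac{L}{6}\|d_k\|^3$ for the function-value bound. No gaps; the constants and the $(1-t)$ weighting are handled correctly.
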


\begin{lemma} \label{lem:gradient-bound_distance}
Suppose $\grad^2 f$ is $L$-Lipschitz. If
$\| d_k \| < \SPTtwo r_k$ or $\hat{\rho}_k \le \beta$ then
\begin{flalign}\label{eq:grad-bound-distance1}
\| \grad f(x_k + d_k) \| \le c_1 L \| d_k \|^2
\end{flalign}
where $c_1 > 0$ is a problem-independent constant:
$$
c_1 := \max\left\{ \frac{5 - 3 \beta}{6 (\SPTthree(1 - \SPTone)(1 - \beta) - \beta \theta)}, \frac{1}{2 (1 - \SPTone)} \right\}.
$$
\end{lemma}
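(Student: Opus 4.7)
The plan is to treat the two triggering conditions separately, with the easy case being $\|d_k\| < \gamma_2 r_k$ and the main work being the case $\hat\rho_k \le \beta$. In both cases I first dispose of the degenerate $\|d_k\| = 0$ sub-case: then \eqref{eq:model-gradient-weaker} at $d_k = 0$ reads $\|\grad f(x_k)\| \le \gamma_1 \|\grad f(x_k)\|$ with $\gamma_1 < 1$, forcing $\grad f(x_k) = \grad f(x_k + d_k) = 0$, and \eqref{eq:grad-bound-distance1} becomes $0 \le 0$.

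In the case $\|d_k\| < \gamma_2 r_k$, I would invoke the complementarity-like condition \eqref{eq:comp-delta-radius1}: $\gamma_2 \delta_k r_k \le \delta_k \|d_k\|$ combined with $\|d_k\| < \gamma_2 r_k$ forces $\delta_k = 0$. Then \eqref{eq:model-gradient-weaker} reduces to $\|\grad M_k(d_k)\| \le \gamma_1 \|\grad f(x_k + d_k)\|$. Plugging this into \eqref{eq:Lip-grad1} from Fact~\ref{fact:TR-method-accuracy} and rearranging gives $(1 - \gamma_1)\|\grad f(x_k + d_k)\| \le \frac{L}{2}\|d_k\|^2$, which matches the second term in the definition of $c_1$.

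The main case is $\hat\rho_k \le \beta$. The plan is to chain three ingredients. From the definition \eqref{eq:compute-rho-k} of $\hat\rho_k$ and \eqref{eq:Lip-progress1} of Fact~\ref{fact:TR-method-accuracy}, I get a one-sided inequality
\begin{flalign*}
(1 - \beta)(-M_k(d_k)) \le \tfrac{L}{6}\|d_k\|^3 + \tfrac{\beta \theta}{2} \|\grad f(x_k + d_k)\|\,\|d_k\|.
\end{flalign*}
Separately, using the triangle inequality on \eqref{eq:model-gradient-weaker} to bound $\|\grad M_k(d_k)\| \le \gamma_1 \|\grad f(x_k + d_k)\| + \delta_k \|d_k\|$, then combining with \eqref{eq:Lip-grad1}, yields $(1 - \gamma_1)\|\grad f(x_k + d_k)\| \le \delta_k\|d_k\| + \tfrac{L}{2}\|d_k\|^2$. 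Finally \eqref{eq:model-reduction-by-dist1} lets me upper bound $\delta_k \|d_k\|$ by $-2 M_k(d_k)/(\gamma_3 \|d_k\|)$, which, after substitution, gives a lower bound of the form $-M_k(d_k) \ge \tfrac{\gamma_3(1-\gamma_1)\|d_k\|}{2}\|\grad f(x_k+d_k)\| - \tfrac{\gamma_3 L}{4}\|d_k\|^3$.

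Plugging this lower bound into the $\hat\rho_k \le \beta$ inequality, the $\|\grad f(x_k + d_k)\|\,\|d_k\|$ terms collect with coefficient $(1-\beta)\gamma_3(1-\gamma_1) - \beta\theta$, which is positive by the hypothesis $\frac{\beta\theta}{\gamma_3(1-\beta)} + \gamma_1 < 1$. Dividing through by $\|d_k\|$ (valid since we already handled $\|d_k\|=0$) and using $\gamma_3 \le 1$ to simplify $2 + 3(1-\beta)\gamma_3 \le 5 - 3\beta$ in the numerator delivers exactly the first term in $c_1$. The main obstacle is the bookkeeping in this last step: balancing the cubic $L\|d_k\|^3$ contributions from both \eqref{eq:Lip-progress1} and \eqref{eq:Lip-grad1} against the $-M_k(d_k)$ term, and verifying that the algorithmic parameter constraint is precisely what makes the coefficient of $\|\grad f(x_k+d_k)\|$ strictly positive so the inequality can be solved for $\|\grad f(x_k + d_k)\|$.
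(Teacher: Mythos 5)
Your proposal is correct and follows essentially the same route as the paper's proof: the same three ingredients (\eqref{eq:Lip-progress1} with the definition of $\hat{\rho}_k$, \eqref{eq:Lip-grad1} with \eqref{eq:model-gradient-weaker}, and \eqref{eq:model-reduction-by-dist1}) are chained together, merely in a slightly different order, and the algebra lands on exactly the same constant $c_1$. Your explicit handling of the degenerate $\|d_k\|=0$ sub-case is a small bonus the paper leaves implicit.
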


For the remainder of this section we will find the following quantities useful,
\begin{flalign*}%\label{d-min-and-d-max-def}
\dmin &:= \SPTtwo \omega^{-1} c_1^{-1/2} L^{-1/2} \epsilon^{1/2} \\
\dmax &:= \frac{2 \omega}{\beta \theta} \cdot \frac{\Delta_f}{ \epsilon}.
\end{flalign*}
As we will show shortly in Lemma~\ref{lem:bound-direction-sizes}, after a short warm up period $\dmin$ and $\dmax$ represent lower and upper bound on $\|d_k\|$ (i.e., $\dmin \le \| d_k \| \le \dmax$) as long as $\| \grad f(x_k + d_k) \| \ge \epsilon$.
But before presenting and proving Lemma~\ref{lem:bound-direction-sizes} we develop  Lemma~\ref{lem:directions-increase-decrease} which is a stepping stone to proving Lemma~\ref{lem:bound-direction-sizes}.
Lemma~\ref{lem:directions-increase-decrease}
shows that if $\| d_k \|$ is almost above $\dmax$ then the trust-region radius will shrink, and if $\| d_k \|$ is almost below $\dmin$ then the trust-region radius will grow (recall from Algorithm~\ref{alg:SOAT} that $\omega \in (1,\infty)$).

\begin{lemma}\label{lem:directions-increase-decrease}
Suppose $\grad^2 f$ is $L$-Lipschitz.
Let $\epsilon \in (0,\infty)$ and $\| \grad f(x_{k} + d_k) \| \ge \epsilon$ then
\begin{enumerate}
\item If $\| d_k \| > \dmax / \omega $ then $\| d_k \| / \omega = r_{k+1}$. \label{lem:directions-increase-decrease:part-1}

\item  If $\| d_k \| < \omega \SPTtwo ^ {-1} \dmin$ then $\SPTtwo r_k \leq \| d_k \| \leq r_k \And \omega \| d_k \| = r_{k+1}$.\label{lem:directions-increase-decrease:part-2}
\end{enumerate}
\end{lemma}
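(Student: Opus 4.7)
The plan is to argue both parts by contradiction, with Part 1 resting on the bounded-optimality-gap idea and Part 2 resting on Lemma~\ref{lem:gradient-bound_distance}. The common bookkeeping is the identity $\omega \SPTtwo^{-1} \dmin = c_1^{-1/2} L^{-1/2} \epsilon^{1/2}$, which by construction makes $c_1 L (\omega \SPTtwo^{-1} \dmin)^2 = \epsilon$; this is the algebraic hinge of Part 2.

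For Part 1, suppose for contradiction that $\hat{\rho}_k \ge \beta$. The model reduction bound \eqref{eq:model-reduction-by-dist1} gives $-\Mk(d_k) \ge \tfrac{\SPTthree}{2} \delta_k \| d_k \|^2 \ge 0$, so the denominator of \eqref{eq:compute-rho-k} satisfies $-\Mk(d_k) + \tfrac{\theta}{2} \| \grad f(x_k+d_k)\| \| d_k\| \ge \tfrac{\theta}{2} \| \grad f(x_k+d_k)\| \| d_k\|$. Combining this with $\hat{\rho}_k \ge \beta$, and then using $\|\grad f(x_k+d_k)\| \ge \epsilon$ together with the hypothesis $\| d_k \| > \dmax/\omega = 2 \Delta_f / (\beta\theta\epsilon)$, yields
\begin{flalign*}
f(x_k) - f(x_k+d_k) \; \ge \; \beta \cdot \tfrac{\theta}{2} \| \grad f(x_k+d_k) \| \| d_k \| \; > \; \Delta_f.
\end{flalign*}
But the algorithm only accepts non-increasing steps, so $f(x_k) \le f(x_1)$, and $f$ is bounded below by $f_\star$, giving $f(x_k) - f(x_k+d_k) \le \Delta_f$, a contradiction. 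Hence $\hat{\rho}_k < \beta$ and the algorithm sets $r_{k+1} = \| d_k \|/\omega$.

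For Part 2, I would establish the three assertions in sequence. The bound $\| d_k \| \le r_k$ is part of the subproblem termination criterion \eqref{eq:subproblem-termination-criteria}, so it is immediate. For $\SPTtwo r_k \le \| d_k \|$, suppose instead $\| d_k \| < \SPTtwo r_k$; then Lemma~\ref{lem:gradient-bound_distance} applies and gives $\|\grad f(x_k+d_k)\| \le c_1 L \| d_k \|^2$. Plugging in $\| d_k \| < \omega \SPTtwo^{-1} \dmin$ and the identity above yields $\|\grad f(x_k+d_k)\| < \epsilon$, contradicting the hypothesis $\|\grad f(x_k+d_k)\| \ge \epsilon$. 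Finally, for $\omega \| d_k \| = r_{k+1}$, suppose for contradiction that $\hat{\rho}_k < \beta$; then $\hat{\rho}_k \le \beta$, so Lemma~\ref{lem:gradient-bound_distance} applies and the same calculation produces the same contradiction with $\|\grad f(x_k+d_k)\| \ge \epsilon$. Therefore $\hat{\rho}_k \ge \beta$ and the algorithm sets $r_{k+1} = \omega \| d_k \|$.

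The main conceptual step is recognizing that Lemma~\ref{lem:gradient-bound_distance} was designed so that the same inequality $\|\grad f(x_k + d_k)\| \le c_1 L \|d_k\|^2$ is the obstruction in \emph{both} of the scenarios $\| d_k \| < \SPTtwo r_k$ and $\hat{\rho}_k \le \beta$; once this is observed, ruling out these two scenarios reduces to a single $\epsilon$-vs-$c_1 L \|d_k\|^2$ comparison. No other genuinely subtle estimate is required; the rest is arithmetic in the definitions of $\dmin$ and $\dmax$.
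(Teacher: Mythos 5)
Your proof is correct and takes essentially the same route as the paper's: Part 1 is the paper's direct bound $\hat{\rho}_k < \beta$ recast as a contradiction with the optimality gap $\Delta_f$, and Part 2 is the paper's contrapositive application of Lemma~\ref{lem:gradient-bound_distance} (via the identity $c_1 L(\omega\SPTtwo^{-1}\dmin)^2 = \epsilon$), merely reorganized into separate per-assertion contradictions. No gaps.
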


\begin{proof}
Proof for \ref{lem:directions-increase-decrease:part-1}.
We have
\begin{flalign}\label{eq:when-d-k-is-big}
&\| d_k \| > \dmax / \omega = 2\frac{\Delta_f}{\beta \theta\epsilon} \ge 2\frac{f(x_k)-f(x_{k+1})}{\beta \theta\epsilon}
\end{flalign}

where the first equality uses the definition of $\dmax$ and the second inequality uses $f(x_{k+1}) \ge \inf_{x \in \R^{n}} f(x)$ and $f(x_1) \ge f(x_k)$. Furthermore,
\begin{flalign*}
\hat{\rho}_k &= \frac{f(x_k) - f(x_k + d_k)}{-\Mk(d_k) + \frac{\theta}{2} \| \grad f(x_k + d_k) \| \| d_k \|} \le \frac{f(x_k) - f(x_{k+1})}{\frac{\theta}{2} \| \grad f(x_k + d_k) \| \| d_k \|} \le 2\frac{f(x_k) - f(x_{k+1})}{\theta \epsilon \| d_k \|} < \beta
\end{flalign*}
where the first inequality follows from $-\Mk(d_k) \ge 0$ and $ f(x_{k+1}) \le f(x_k + d_k)$, the second inequality follows from the fact that $\|\grad f(x_k + d_k)\| \ge \epsilon$, and the third inequality uses \eqref{eq:when-d-k-is-big}.
By inspection of Algorithm \ref{alg:SOAT}, if $\hat{\rho}_k < \beta$, then $\| d_k \| / \omega = r_{k+1}$. 

Proof for \ref{lem:directions-increase-decrease:part-2}. 
We will prove the result by contrapositive. 
In particular, suppose that $\neg (\SPTtwo r_k \le \| d_k \| \le r_k)$ \text{ or } $\| d_k \| \omega \neq r_{k+1}$. 
Let us consider these two cases.
If $\neg (\SPTtwo r_k \le \| d_k \| \le r_k)$ then as $\| d_k \| \le r_k$ we have $\SPTtwo r_k > \| d_k \|$.
If $\| d_k \| \omega \neq r_{k+1}$ then by inspection of Algorithm~\ref{alg:SOAT} we have $\hat{\rho}_k \le \beta$. Therefore, in both these cases the premise of Lemma~\ref{lem:gradient-bound_distance} holds.
Now, by $\| \grad f(x_k + d_k) \| \ge \epsilon$ and Lemma~\ref{lem:gradient-bound_distance} we get
$$
\epsilon \le \| \grad f(x_k + d_k) \| \le c_1 L \| d_k \|^2
$$
which implies $\| d_k \| \ge c_1^{-1/2} L^{-1/2} \epsilon^{1/2} = \SPTtwo^{-1} \omega (\SPTtwo \omega^{-1} c_1^{-1/2} L^{-1/2} \epsilon^{1/2}) =  \SPTtwo^{-1} \omega \dmin$.
\end{proof}

\noindent

Now we show that the norm of the direction $d_k$, after some finite iteration $\ubar{k}_\epsilon$, will be  bounded below and above by $\dmin$ and $\dmax$ respectively. For that we first define:
\[
K_\epsilon := \min \{ \{ k \in \N : \| \grad f(x_k + d_k) \| \le \epsilon \} \cup \{\infty\} \}
\]
as the first iteration for which $\| \grad f(x_k + d_k) \| \le \epsilon$, and we also define:
\[
\ubar{k}_\epsilon := \min \{ \{ k \in \N : \dmin \le \| d_k \| \le \dmax \} \cup \{ K_\epsilon - 1 \} \}
\]
as the first iteration for which $\dmin \le \|d_k\| \le \dmax$.
An illustration of Lemma~\ref{lem:bound-direction-sizes}
is given in Figure~\ref{graph:direction-trend}.
In particular, after a certain warm up period the direction norms
can no longer rise above $\dmax$ or below $\dmin$.
Broadly speaking, the idea behind the proof is that
if $\| d_k \|$ is above $\dmax / \omega$ then at the next iteration $\| d_k \|$ decreases and conversely if $\| d_k \|$ is bellow  $\dmin \omega \SPTtwo^{-1}$ then at the next iteration it must increase.

\begin{lemma} \label{lem:bound-direction-sizes}
Suppose $\grad^2 f$ is $L$-Lipschitz
 and let $\epsilon \in (0,\infty)$.
If $\dmin \le \| d_k \| \le \dmax$ then $\dmin \le \| d_j \| \le \dmax$ for all $j \in [k, K_\epsilon) \cap \N$. Furthermore,
$\ubar{k}_\epsilon \le 1 +  \log_{\SPTtwo \omega}(\max\{ 1, \dmin / r_1, r_1 / \dmax \})$.
\end{lemma}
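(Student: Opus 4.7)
The plan is to prove the two claims in order: first the invariance of $[\dmin, \dmax]$ under the algorithm's dynamics, then the bound on the first entry time $\ubar{k}_\epsilon$. Doing invariance first is useful because once $\|d_k\|$ lands in the interval, I need not worry about it leaving, so the second half reduces to counting iterations until first entry. Both halves lean on Lemma~\ref{lem:directions-increase-decrease}, frequently via its contrapositive.

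For invariance, I would induct one step at a time: assume $\dmin \le \|d_k\| \le \dmax$ with $k+1 < K_\epsilon$ (so that Lemma~\ref{lem:directions-increase-decrease} is available at both $k$ and $k+1$) and prove the same for $k+1$. The upper bound $\|d_{k+1}\| \le \dmax$ is straightforward: it follows from $\|d_{k+1}\| \le r_{k+1}$ together with a case split on the $r_{k+1}$ update rule in Algorithm~\ref{alg:SOAT}, using the contrapositive of Lemma~\ref{lem:directions-increase-decrease} part~1 in the successful case to force $\|d_k\| \le \dmax/\omega$. The main obstacle is the lower bound $\|d_{k+1}\| \ge \dmin$, because neither $\|d_{k+1}\| \le r_{k+1}$ nor the update rule $r_{k+1} \in \{\omega\|d_k\|, \|d_k\|/\omega\}$ gives any direct control from below, and an adversarial subproblem solve could in principle return a tiny $\|d_{k+1}\|$ for any $r_{k+1}$. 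The fix is to argue by contradiction: if $\|d_{k+1}\| < \dmin$, then since $\dmin < \omega\SPTtwo^{-1}\dmin$, Lemma~\ref{lem:directions-increase-decrease} part~2 at index $k+1$ automatically applies and pins $\SPTtwo r_{k+1} \le \|d_{k+1}\|$, hence $r_{k+1} < \dmin/\SPTtwo$. A second case split on $r_{k+1}$ then closes the argument: if $r_{k+1} = \omega\|d_k\|$, this directly gives $\|d_k\| < \dmin/(\omega\SPTtwo) < \dmin$ using $\omega\SPTtwo > 1$, contradicting $\|d_k\| \ge \dmin$; if instead $r_{k+1} = \|d_k\|/\omega$, the iteration is unsuccessful, and the contrapositive of Lemma~\ref{lem:directions-increase-decrease} part~2 at index $k$ forces $\|d_k\| \ge \omega\SPTtwo^{-1}\dmin$, whence $r_{k+1} = \|d_k\|/\omega \ge \dmin/\SPTtwo$, again a contradiction.

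For the entry-time bound, I would show that outside $[\dmin, \dmax]$ the trust-region radius makes multiplicative progress toward the interval. While $\|d_k\| > \dmax$ (and $k < K_\epsilon$), Lemma~\ref{lem:directions-increase-decrease} part~1 forces $r_{k+1} = \|d_k\|/\omega \le r_k/\omega$, so $r_k$ shrinks to $\dmax$ in at most $\log_\omega(r_1/\dmax)$ iterations and thereby drags $\|d_k\|$ below $\dmax$. While $\|d_k\| < \dmin$, Lemma~\ref{lem:directions-increase-decrease} part~2 simultaneously forces $r_{k+1} = \omega\|d_k\| \ge \omega\SPTtwo\, r_k$ (geometric growth with ratio $\omega\SPTtwo > 1$) and caps $r_k \le \|d_k\|/\SPTtwo < \dmin/\SPTtwo$, so this regime can persist for no more than about $\log_{\omega\SPTtwo}(\dmin/r_1)$ iterations. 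Weakening $\log_\omega$ to $\log_{\omega\SPTtwo}$ via $\SPTtwo\omega \le \omega$ and combining the two one-sided counts with the trivial case $r_1 \in [\dmin, \dmax]$ yields the stated $\ubar{k}_\epsilon \le 1 + \log_{\SPTtwo\omega}(\max\{1, \dmin/r_1, r_1/\dmax\})$.
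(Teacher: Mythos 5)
Your proposal is correct and follows essentially the same route as the paper: both halves rest on Lemma~\ref{lem:directions-increase-decrease} and its contrapositive, with the interval-invariance step done by one-step induction and the entry time bounded by geometric shrinkage/growth of the radius outside $[\dmin,\dmax]$. The only cosmetic difference is that you establish the lower-bound invariance by contradiction with a case split on the $r_{k+1}$ update rule, whereas the paper gives a direct three-way case split on $\|d_{k+1}\|$ versus $\SPTtwo r_{k+1}$ and on the size of $\|d_k\|$; the ingredients are identical.
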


\begin{proof}
We begin by proving $\dmin \le \| d_k \| \le \dmax$ then $\dmin \le \| d_j \| \le \dmax$ for $j \in [k, K_\epsilon) \cap \N$.
We assume that $k < K_\epsilon$ otherwise our desired conclusion clearly holds. We split this proof into two claims.

Our first claim is that $\| d_k \| \le \dmax$ implies $\| d_{k+1} \| \le \dmax$.
We split $\| d_k \| \le \dmax$ into two subcases. If $\| d_k \| \le \dmax / \omega$, then inspection of Algorithm~\ref{alg:SOAT} shows that $\| d_{k+1} \| \le r_{k+1} \le \| d_k \| \omega \le \dmax$.
If $\dmax / \omega \le \| d_k \| \le \dmax$, then Lemma~\ref{lem:directions-increase-decrease}.\ref{lem:directions-increase-decrease:part-1} implies that $\| d_{k+1} \| \le r_{k+1} \le \| d_{k} \| / \omega \le \dmax$.

Our second claim is that $\dmin \le \| d_k \|$ implies $\dmin \le \| d_{k+1} \|$.
We split $\dmin \le \| d_k \|$ into three subcases.
If $\| d_{k+1} \| < \SPTtwo r_{k+1}$, then the contrapositive of Lemma~\ref{lem:directions-increase-decrease}.\ref{lem:directions-increase-decrease:part-2} implies that $\| d_{k+1} \| \ge \dmin$. 
If $\SPTtwo r_{k+1} \leq \| d_{k+1} \| \leq r_{k+1}$ and $ \dmin \le \| d_{k} \| < \SPTtwo ^ {-1} \omega \dmin$, then $\dmin < \SPTtwo \omega \dmin \le \SPTtwo \omega \| d_{k} \| =_\star \SPTtwo r_{k+1} \leq \| d_{k+1} \|$ where $\star$ uses Lemma~\ref{lem:directions-increase-decrease}.\ref{lem:directions-increase-decrease:part-2}.
If $\SPTtwo r_{k+1} \leq \| d_{k+1} \| \leq r_{k+1}$ and $\dmin \omega \SPTtwo ^ {-1} \le \| d_k \|$, then $\dmin \le \frac{\SPTtwo \|d_k\|}{\omega} \leq_\star \SPTtwo r_{k+1}\leq \| d_{k + 1} \|$ where $\star$ is from
the update rule for $r_{k+1}$ in Algorithm~\ref{alg:SOAT}.

By induction on the previous two claims we deduce if $\dmin \le \| d_k \| \le \dmax$ then $\dmin \le \| d_j \| \le \dmax$ for $j \in [k, K_\epsilon) \cap \N$.

Next, we prove that if
$$
k \ge 1 + \log_{\omega \SPTtwo}( \dmin / r_1 )
$$
then $\| d_k \| \ge \dmin$.
If $\dmin \le \| d_j \|$ for some $j \le k$, then the result holds because as we already established  $\dmin \le \| d_k \|
\Rightarrow \dmin \le \| d_{k+1} \|$.
On the other hand, if $\| d_j \| < \dmin$ for all $j \le k$, then Lemma~\ref{lem:directions-increase-decrease}.\ref{lem:directions-increase-decrease:part-2} implies that $r_{j+1} = \omega \| d_j \| \geq \omega \SPTtwo r_j$ which by induction gives $\| d_{k} \| \geq (\omega \SPTtwo)^{k-1} r_1 \ge (\omega \SPTtwo)^{\log_{\omega \SPTtwo}(\dmin / r_1)} r_1 = \dmin$.

Finally, we prove that if 
$$
k \ge 1 + \log_{\omega}( r_1 / \dmax)
$$
then $\| d_k \| \le \dmax$.
If $\| d_j \| \le \dmax $ for some $j \le k$, then the result holds because as we already established  $ \| d_k \| \le \dmax
\Rightarrow \| d_{k+1} \| \le \dmax$.
On the other hand, if $\| d_j \| > \dmax$ for all $j \le k$, then Lemma~\ref{lem:directions-increase-decrease}.\ref{lem:directions-increase-decrease:part-1} implies that $r_{j+1} = \| d_j \| / \omega \le r_j / \omega $ which by induction gives $\| d_{k} \| \le \omega^{1-k} r_1 \le \omega^{-\log_{\omega}(r_1 / \dmax)} r_1 = \dmax$.
\end{proof}

\begin{figure}[t]
 \centering
\includegraphics[scale=0.33]{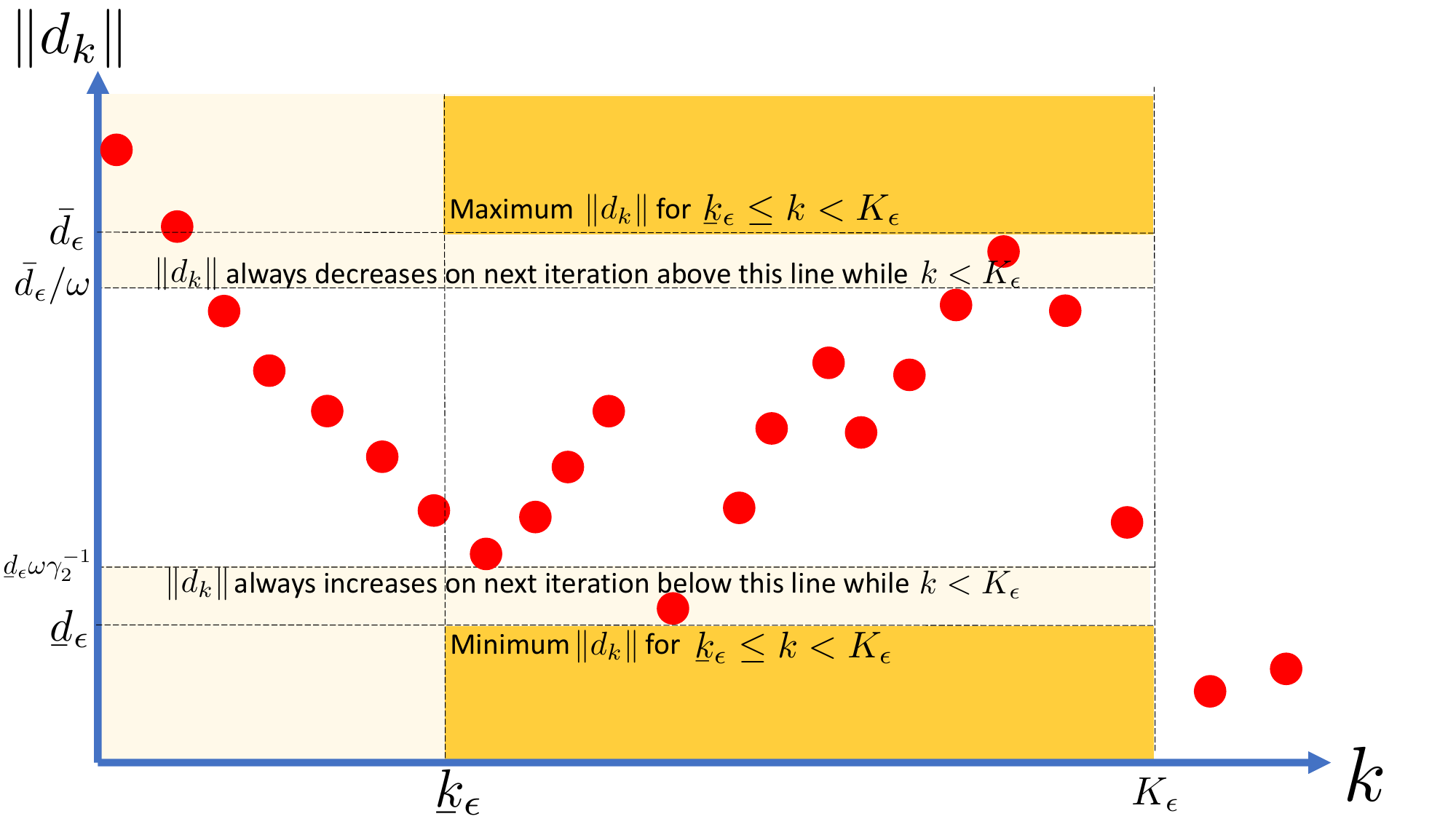}
\caption{
An example of a plausible sequence of iterates and the norms of
their directions. Each red dot represents an iterate and its search direction norm. This illustrates  Lemma~\ref{lem:bound-direction-sizes}.}
\label{graph:direction-trend}
\end{figure} 

Let
$$
\mathcal{P}_\epsilon := \{ k \in \N : \hat{\rho}_k \ge \beta, \ubar{k}_\epsilon \le  k < K_\epsilon \}
$$
which represents the set of iterations, before we find an $\epsilon$-approximate stationary point, where the function value is reduced a large amount compared with our target reduction, i.e., $\hat{\rho}_k \ge \beta$.
Lemma~\ref{lem:increase-radius-index} shows that there is a finite number of these iterations until the gradient drops below the target threshold $\epsilon$.
The proof of Lemma~\ref{lem:increase-radius-index} appears in Appendix~\ref{sec:lem:increase-radius-index}.
Roughly, the idea of the proof is to use
that, due our definition of 
$\hat{\rho}_k$, when $\hat{\rho}_k \ge \beta$ we always reduce the function value by at least
$\frac{\beta \theta}{2} \| \grad f(x_k + d_k) \| \| d_k \|$
and $\| d_k \|$ can be lower bounded
by $\dmin$ using Lemma~\ref{lem:bound-direction-sizes}.
As we cannot reduce the function value
by a constant value indefinitely, we must eventually have $\| \grad f(x_k + d_k) \| \le \epsilon$.

\begin{lemma} \label{lem:increase-radius-index}
Suppose $\grad^2 f$ is $L$-Lipschitz and $\epsilon \in (0,\infty)$ then
$\abs{ \mathcal{P}_\epsilon } \le \frac{\dmax}{\dmin \omega} + 1 = \frac{2 \omega c_1^{1/2}}{\beta \theta} \cdot \frac{\Delta_f L^{1/2}}{\epsilon^{-3/2}} + 1$.
\end{lemma}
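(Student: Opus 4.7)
The plan is to show that every iteration in $\mathcal{P}_\epsilon$ produces a function value decrease of at least $\frac{\beta\theta}{2}\epsilon\,\dmin$, and then to telescope these decreases against the optimality gap $\Delta_f$. The key input from earlier lemmas is that Lemma~\ref{lem:bound-direction-sizes} forces $\|d_k\| \ge \dmin$ on all of $\mathcal{P}_\epsilon$ (away from a trivial edge case), while the definitions of $\mathcal{P}_\epsilon$ and $K_\epsilon$ give $\|\grad f(x_k+d_k)\| > \epsilon$.

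Concretely, fix $k \in \mathcal{P}_\epsilon$. By \eqref{eq:model-reduction-by-dist1} we have $-M_k(d_k) \ge \SPTthree \frac{\delta_k}{2}\|d_k\|^2 \ge 0$, so combining $\hat{\rho}_k \ge \beta$ with the definition \eqref{eq:compute-rho-k} yields
\[
f(x_k)-f(x_k+d_k) \;\ge\; \beta\Bigl(-M_k(d_k)+\tfrac{\theta}{2}\|\grad f(x_k+d_k)\|\,\|d_k\|\Bigr) \;\ge\; \tfrac{\beta\theta}{2}\|\grad f(x_k+d_k)\|\,\|d_k\|.
\]
Since $k < K_\epsilon$, the right-hand side is bounded below by $\tfrac{\beta\theta}{2}\epsilon\|d_k\|$. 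I would then split into two cases based on $\ubar{k}_\epsilon$. In the non-trivial case where $\ubar{k}_\epsilon < K_\epsilon - 1$, the index $\ubar{k}_\epsilon$ satisfies $\dmin \le \|d_{\ubar{k}_\epsilon}\| \le \dmax$ by its defining minimality, so Lemma~\ref{lem:bound-direction-sizes} propagates $\dmin \le \|d_j\| \le \dmax$ to every $j \in [\ubar{k}_\epsilon, K_\epsilon) \cap \N$, and in particular to every $k \in \mathcal{P}_\epsilon$. Thus $f(x_k) - f(x_{k+1}) \ge \tfrac{\beta\theta}{2}\epsilon\,\dmin$ for all $k \in \mathcal{P}_\epsilon$.

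Next I would telescope. The algorithm guarantees $f(x_{k+1}) \le f(x_k)$ for every $k$, and $f$ is bounded below by $f_\star$, so
\[
\Delta_f \;\ge\; \sum_{k \in \mathcal{P}_\epsilon} \bigl(f(x_k)-f(x_{k+1})\bigr) \;\ge\; |\mathcal{P}_\epsilon|\cdot \tfrac{\beta\theta}{2}\,\epsilon\,\dmin,
\]
which rearranges to $|\mathcal{P}_\epsilon| \le \tfrac{2\Delta_f}{\beta\theta\,\epsilon\,\dmin} = \tfrac{\dmax}{\dmin\,\omega}$ after substituting the definition of $\dmax$. In the edge case $\ubar{k}_\epsilon = K_\epsilon - 1$, the set $\mathcal{P}_\epsilon$ contains at most the single index $K_\epsilon-1$, so $|\mathcal{P}_\epsilon| \le 1$. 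Combining both cases gives the claimed bound $\tfrac{\dmax}{\dmin\omega}+1$, and plugging in the definitions of $\dmin$ and $\dmax$ yields the explicit form $\tfrac{2\omega c_1^{1/2}}{\beta\theta\SPTtwo}\cdot\Delta_f L^{1/2}\epsilon^{-3/2}+1$.

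I expect the main obstacle to be bookkeeping around $\ubar{k}_\epsilon$ rather than any substantive inequality: one has to be careful that the lower bound $\|d_k\| \ge \dmin$ really applies to every element of $\mathcal{P}_\epsilon$, and the cleanest way to handle this is the case split above combined with the invariance given by Lemma~\ref{lem:bound-direction-sizes}. Everything else (non-negativity of $-M_k(d_k)$, the definition of $\hat{\rho}_k$, monotonicity of $f$ along iterates) is routine.
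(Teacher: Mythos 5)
Your proof is correct and follows essentially the same route as the paper's: lower-bound the decrease at each $k\in\mathcal{P}_\epsilon$ by $\tfrac{\beta\theta}{2}\epsilon\,\dmin$ using $\hat{\rho}_k\ge\beta$, $-M_k(d_k)\ge 0$, $\|\grad f(x_k+d_k)\|\ge\epsilon$ and Lemma~\ref{lem:bound-direction-sizes}, then telescope against $\Delta_f$. The only (harmless) differences are that you sum over all of $\mathcal{P}_\epsilon$ directly rather than between consecutive elements via a permutation, and you handle the $\ubar{k}_\epsilon=K_\epsilon-1$ edge case explicitly, which is where the $+1$ enters in either version.
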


With Lemma~\ref{lem:increase-radius-index} in hand we are now ready to prove our main result,
Theorem~\ref{thm:main-fully-adaptive-result}.
We have already provided a bound on the length
of the warm up period, $\ubar{k}_\epsilon$ (Lemma~\ref{lem:bound-direction-sizes}) and on the number of points with $\hat{\rho}_k \ge \beta$.
Therefore, the only obstacle is to bound the number
of points with $\hat{\rho}_k < \beta$. However,
on these iterations we always decrease the radius
by at least $\omega$ (see update rules in Algorithm~\ref{alg:SOAT}), and therefore as $\| d_k \|$ is bounded below by $\dmin$, there must be
iterations where we increase the radius $r_k$, which by definition of Algorithm~\ref{alg:SOAT} only occurs if $\beta \ge \hat{\rho}_k$. Consequently, the number of iterations where $\beta < \hat{\rho}_k$ can be bounded by the number of iterations where $\beta \ge \hat{\rho}_k$ plus a $\tilde{O}(1)$ term.
This is the crux of the proof of Theorem~\ref{thm:main-fully-adaptive-result} which appears in Section~\ref{sec:main-fully-adaptive-result}.

\begin{theorem}\label{thm:main-fully-adaptive-result}
Suppose that $\grad^2 f$ is $L$-Lipschitz and $f$ is bounded below with $\Delta_f = f(x_1) - f_\star$, then for all $\epsilon \in (0,\infty)$ there exists some iteration $k$ with $\| \grad f(x_k + d_k) \| \le \epsilon$ and
$$
k \le O\left( \frac{\Delta_f L^{\frac{1}{2}}}{ \epsilon^{\frac{3}{2}}} + \log\left(  \frac{\epsilon ^ \frac{1}{2}}{L ^ {\frac{1}{2}} r_1} +  \frac{r_1 \epsilon}{\Delta_f} + 1 \right) + 1 \right)
$$
where $O( \cdot )$ hides problem-independent constant factors and $r_1$ is the initial trust-region radius.
\end{theorem}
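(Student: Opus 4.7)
The plan is to decompose the termination index as $K_\epsilon = \ubar{k}_\epsilon + N_+ + N_-$, where $N_+$ and $N_-$ count the iterations $k \in [\ubar{k}_\epsilon, K_\epsilon)$ with $\hat{\rho}_k \ge \beta$ and with $\hat{\rho}_k < \beta$ respectively, and then bound each piece separately. The warm-up term $\ubar{k}_\epsilon$ is handled directly by Lemma~\ref{lem:bound-direction-sizes}: substituting the definitions of $\dmin$ and $\dmax$ into the bound $1 + \log_{\SPTtwo \omega}(\max\{1, \dmin/r_1, r_1/\dmax\})$ produces exactly the ratios $\epsilon^{1/2}/(L^{1/2} r_1)$ and $r_1 \epsilon/\Delta_f$ that appear inside the logarithm of the theorem. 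The count $N_+$ is bounded by Lemma~\ref{lem:increase-radius-index}, giving $N_+ \le O(\Delta_f L^{1/2}/\epsilon^{3/2}) + 1$.

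The main work is the bound on $N_-$. The idea is to track the trust-region radius. Because every accepted direction satisfies $\|d_k\| \le r_k$, the update rule of Algorithm~\ref{alg:SOAT} gives $r_{k+1} \le \omega r_k$ when $\hat{\rho}_k \ge \beta$ and $r_{k+1} \le r_k/\omega$ when $\hat{\rho}_k < \beta$. Taking logarithms and telescoping across $k \in [\ubar{k}_\epsilon, K_\epsilon)$ yields
$$\log r_{K_\epsilon} - \log r_{\ubar{k}_\epsilon} \le (N_+ - N_-)\log \omega,$$
so $N_- \le N_+ + \log_\omega(r_{\ubar{k}_\epsilon}/r_{K_\epsilon})$. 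To control the ratio, Lemma~\ref{lem:bound-direction-sizes} guarantees $\|d_k\| \ge \dmin$ throughout $[\ubar{k}_\epsilon, K_\epsilon)$, which forces $r_{K_\epsilon} \ge \dmin/\omega$ through the update rule. For the upper bound on $r_{\ubar{k}_\epsilon}$ I would use the crude estimate $r_{k+1} \le \omega r_k$ (valid on every iteration), which gives $r_{\ubar{k}_\epsilon} \le \omega^{\ubar{k}_\epsilon-1} r_1$. This produces
$$\log_\omega(r_{\ubar{k}_\epsilon}/r_{K_\epsilon}) \le (\ubar{k}_\epsilon - 1) + \log_\omega(\omega r_1 / \dmin).$$
The first summand is dominated by the warm-up bound, and the second can be rewritten as $\log_\omega(r_1/\dmax) + \log_\omega(\dmax/\dmin) + O(1)$. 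Since $\dmax/\dmin = \Theta(\Delta_f L^{1/2}/\epsilon^{3/2})$, the $\log(\dmax/\dmin)$ piece is absorbed into the leading $\Delta_f L^{1/2}/\epsilon^{3/2}$ term via $\log x \le x$, and $\log(r_1/\dmax)$ is already present in the theorem's logarithmic term.

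Summing the bounds on $\ubar{k}_\epsilon$, $N_+$, and $N_-$ yields the stated complexity. The step I expect to be most delicate is choosing the right bookkeeping for the radius ratio: the warm-up phase is not governed by the two-sided bound of Lemma~\ref{lem:bound-direction-sizes}, so a tight case analysis on $r_{\ubar{k}_\epsilon}$ would be messy. The trick is to observe that the trivial one-sided inequality $r_{k+1} \le \omega r_k$ is already enough, because the extra factor it introduces is exactly $\omega^{\ubar{k}_\epsilon}$, whose logarithm is absorbed into the warm-up term that we are already paying for.
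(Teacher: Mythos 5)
Your proposal is correct and follows essentially the same route as the paper: the same decomposition $K_\epsilon = \ubar{k}_\epsilon + N_+ + N_-$, with the warm-up and $N_+$ handled by Lemmas~\ref{lem:bound-direction-sizes} and~\ref{lem:increase-radius-index}, and $N_-$ bounded by telescoping the radius updates (the paper phrases this as the induction $r_k \le r_{\ubar{k}_\epsilon}\omega^{p_k - n_k}$). The only difference is a bookkeeping detail — you control $r_{\ubar{k}_\epsilon}/r_{K_\epsilon}$ via the crude bound $r_{\ubar{k}_\epsilon} \le \omega^{\ubar{k}_\epsilon - 1} r_1$ and absorb the extra $\omega^{\ubar{k}_\epsilon}$ into the warm-up term, whereas the paper relates the radius directly to $\dmax$ and $\dmin$ through Lemma~\ref{lem:bound-direction-sizes} — and both yield the stated bound.
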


One drawback of Theorem~\ref{thm:main-fully-adaptive-result} is that it only bounds the number of iterations
to find a first-order stationary point.
Many second-order methods in the literature show convergence 
to points satisfying the second-order optimality conditions \cite{birgin2017use,cartis2011adaptiveII,curtis2017trust,zhang2022adaptive}.
Of course, these methods are not consistently adaptive. Therefore, in the future, it would be
interesting to develop a method
that provides a consistently adaptive 
convergence guarantee for finding
second-order stationary points.

\section{Quadratic convergence when sufficient conditions for local optimality hold}\label{sec:superlinear-results}

\begin{theorem}\label{thm:main-superlinear-convergence-result}
Suppose $f$ is twice differentiable and for some $x_\star \in R^{n}$ the second-order sufficient conditions for local optimality hold ($\grad f(x_\star) = 0$ and
$\grad^2 f(x_\star) \succ 0$). Under these conditions there exists a neighborhood $N$ around $x_\star$ and a constant $c > 0$ such that if $x_i \in \NBH$ then there exist $x_{\hat{k}} \in \NBH$ such that for all $k \ge \hat{k}$ we have
$\| x_{k+1} - x_\star \| \le c \| x_k - x_\star \|^2 \le \frac{1}{2} \| x_k - x_\star \|$.
\end{theorem}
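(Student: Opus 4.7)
The plan is to show that starting from any $x_i \in N$, the algorithm enters in finitely many iterations a self-sustaining ``Newton regime'' in which the approximate subproblem conditions \eqref{eq:subproblem-termination-criteria} force $\delta_k = 0$, after which standard inexact-Newton arguments apply. First I would pick $N$ small enough that $\grad^2 f(x) \succeq (\mu/2) \eye$ with $\mu = \lambda_{\min}(\grad^2 f(x_\star)) > 0$, that $\|\grad^2 f(x)\| \le L_1$, and that $\grad^2 f$ is Lipschitz on $N$ with some constant $L$ (this tacitly requires $\grad^2 f$ to be locally Lipschitz near $x_\star$, a natural regularity assumption). Under these bounds $\|\grad f(x)\|$ is comparable to $\|x - x_\star\|$, so quadratic convergence in the gradient norm is equivalent to the stated quadratic convergence of $\|x_k - x_\star\|$.

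The key estimate comes from the approximate KKT condition \eqref{eq:model-gradient-weaker}. Writing $(\grad^2 f(x_k) + \delta_k \eye) d_k + \grad f(x_k) = \eta_k$ with $\|\eta_k\| \le \SPTone \|\grad f(x_k + d_k)\|$ and combining with the Taylor identity $\grad f(x_k + d_k) = \grad f(x_k) + \grad^2 f(x_k) d_k + \rho_k$, $\|\rho_k\| \le \tfrac{L}{2}\|d_k\|^2$, yields $\grad f(x_k + d_k) = -\delta_k d_k + \eta_k + \rho_k$ and hence
\[
(1 - \SPTone)\|\grad f(x_k + d_k)\| \le \delta_k \|d_k\| + \tfrac{L}{2}\|d_k\|^2.
\]
Using $\grad^2 f(x_k) + \delta_k \eye \succeq (\mu/2) \eye$ in \eqref{eq:model-gradient-weaker} a second time delivers a bound of the form $\|d_k\| \le C_0 \|\grad f(x_k)\|$ on $N$ for a problem-dependent $C_0$, provided $N$ is chosen small enough that $\SPTone L_1 < \mu/2$. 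Quadratic convergence then reduces to forcing $\delta_k = 0$, in which case $\|\grad f(x_k + d_k)\| \le \tfrac{L C_0^2}{2(1-\SPTone)}\|\grad f(x_k)\|^2$.

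To force $\delta_k = 0$ I would use the complementarity-like condition \eqref{eq:comp-delta-radius1}: if $\delta_k > 0$ then $\|d_k\| \ge \SPTtwo r_k$, which combined with $\|d_k\| \le C_0 \|\grad f(x_k)\|$ gives $r_k \le C_0 \|\grad f(x_k)\|/\SPTtwo$. Contrapositively, whenever $r_k > C_0 \|\grad f(x_k)\|/\SPTtwo$ we are in the Newton regime. The regime is self-sustaining because $\|d_k\|$ is $\Theta(\|\grad f(x_k)\|)$, so $r_{k+1} = \omega \|d_k\|$ is also $\Theta(\|\grad f(x_k)\|)$, dominating the next Newton threshold $C_0 \|\grad f(x_{k+1})\|/\SPTtwo$ of order $\|\grad f(x_k)\|^2$. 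To bootstrap into the regime starting from $x_i \in N$, I would show $\hat{\rho}_k \ge \beta$ whenever $\|d_k\|$ is sufficiently small: Taylor-expanding the numerator and denominator of $\hat{\rho}_k$ via Fact~\ref{fact:TR-method-accuracy} and using the lower bound $-M_k(d_k) \ge (\mu/4 + \delta_k)\|d_k\|^2 - \SPTone \|\grad f(x_k + d_k)\| \|d_k\|$ derived from \eqref{eq:model-gradient-weaker}, the $\tfrac{\theta}{2}\|\grad f(x_k + d_k)\|\|d_k\|$ term in the denominator reduces to a strict fraction of $-M_k(d_k)$ thanks to the hyperparameter constraint $\tfrac{\beta \theta}{\SPTthree(1-\beta)} + \SPTone < 1$. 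Hence $\hat{\rho}_k \ge \beta$ asymptotically, so $r_{k+1} = \omega \|d_k\| \ge \omega \SPTtwo r_k > r_k$ whenever $\delta_k > 0$, and $r_k$ grows geometrically until it clears the Newton threshold; I set $\hat{k}$ to be the first iterate at which this happens.

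The main obstacle is the bootstrap: we cannot assume the inexact solver returns the exact Newton step even when it exists, so every bound must come solely from \eqref{eq:subproblem-termination-criteria}. Tying $\delta_k$ to $r_k$ via \eqref{eq:comp-delta-radius1}, and invoking \eqref{eq:model-reduction-by-dist1} to lower-bound $-M_k(d_k)$ in the active-constraint case, are the core technical moves. A secondary subtlety is keeping iterates inside $N$ throughout the bootstrap even when $r_k$ is initially large: the bound $\|d_k\| \le C_0 \|\grad f(x_k)\|$ controls the actual step size regardless of $r_k$, and the monotone decrease of $f$ on accepted steps keeps iterates close to $x_\star$, so $N$ can be chosen small enough at the outset to contain the entire tail of the iterate sequence.
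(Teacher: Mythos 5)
Your overall architecture is essentially the paper's: restrict to a neighborhood where $f$ is strongly convex and smooth, show $\hat{\rho}_k \ge \beta$ there so that $r_{k+1} = \omega\|d_k\|$ always, observe via \eqref{eq:comp-delta-radius1} that while the trust-region constraint is active the radius grows geometrically ($r_{k+1} = \omega\|d_k\| \ge \omega\SPTtwo r_k$ with $\omega\SPTtwo > 1$) and hence can remain active only $O(\log)$ many iterations, and then show the inactive ($\delta_k = 0$) regime is self-sustaining and yields quadratic convergence. This is exactly the content of Lemma~\ref{lem:superlinear-big-r-k} and Theorem~\ref{thm:general-superlinear-convergence-result}; your phrasing in terms of gradient norms rather than $\|x_k - x_\star\|$ and the proximal point $\hat{x}_k$ is a cosmetic difference.

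There is, however, one step that does not go through as written. You derive $\|d_k\| \le C_0\|\grad f(x_k)\|$ from \eqref{eq:model-gradient-weaker} alone, and this forces the condition $\SPTone L_1 < \mu/2$, i.e.\ $\SPTone < \lambda_{\min}(\grad^2 f(x_\star))/(2\lambda_{\max}(\grad^2 f(x_\star)))$ (up to constants). This is a problem-dependent restriction on the problem-independent parameter $\SPTone \in [0,1)$, and shrinking $N$ cannot remove it: $L_1$ is bounded below by $\lambda_{\max}(\grad^2 f(x_\star))$ and $\mu$ bounded above by $\lambda_{\min}(\grad^2 f(x_\star))$ no matter how small $N$ is, so for ill-conditioned Hessians your bound fails for admissible values of $\SPTone$. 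Since $\|d_k\| \lesssim \|\grad f(x_k)\|$ underpins your containment in $N$, the self-sustaining argument, and the final quadratic rate, this is a load-bearing step. The fix is the one the paper uses: condition \eqref{eq:model-reduction-by-dist1} gives $M_k(d_k) \le -\SPTthree\frac{\delta_k}{2}\|d_k\|^2 \le 0$, and combining with strong convexity, $\frac{\alpha}{2}\|d_k\|^2 - \|\grad f(x_k)\|\,\|d_k\| \le M_k(d_k) \le -\SPTthree\frac{\delta_k}{2}\|d_k\|^2$, yields $\|d_k\| \le 2\|\grad f(x_k)\|/(\alpha + \SPTthree\delta_k)$ with no condition on $\SPTone$ whatsoever. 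You already invoke \eqref{eq:model-reduction-by-dist1} elsewhere, so this is a local repair rather than a change of strategy; with it in place the rest of your outline matches the paper's proof.
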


The proof of Theorem~\ref{thm:main-superlinear-convergence-result} appears in Section~\ref{sec:proof-of-superlinear-convergence}.
It is a little tricker than typical quadratic convergence proofs for trust-region
methods because in our method
we have $\lim_{k \rightarrow 0} r_k \rightarrow 0$ whereas classical trust-region methods have $r_k$ bounded away from zero \cite[Proof of Theorem~4.14]{sorensen1982newton}. Fortunately,
one can show that asymptotically $r_k \ge \omega \| x_k - x_\star \|$ so the decaying radius does not interfere with quadratic convergence. 
In particular, the crux of proving
Theorem~\ref{thm:main-superlinear-convergence-result} is proving
the premise of  Lemma~\ref{lem:superlinear-big-r-k} (as the conclusion of Lemma~\ref{lem:superlinear-big-r-k} is $r_k \ge \omega \| x_k - x_\star \|$).
For this Lemma we define $\diam(X) := \sup_{x, x' \in X} \| x - x' \|$.

\begin{lemma}\label{lem:superlinear-big-r-k}
Let $\NBH$ be a bounded set such that for all $x_k \in \NBH$ we have $x_{k+1} \in \NBH$,
$\hat{\rho}_k \ge \beta$, 
and $\min\{ \SPTtwo r_{k}, \| x_{k+1} - x_\star \| \} \le \| d_k \| \le  \omega \SPTtwo \| x_{k} - x_\star \| $.
Suppose there exists $x_k \in \NBH$
and let $i$ be the smallest index with $x_i \in \NBH$,
then $r_k \ge \omega \| x_k - x_\star \|$ for all
$k \ge 2 + i + \log_{\SPTtwo \omega}(\frac{\diam(\NBH)}{\| d_i \|})$.
\end{lemma}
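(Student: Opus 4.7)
The plan is to exploit the acceptance rule $r_{k+1} = \omega\|d_k\|$, which is forced by $\hat{\rho}_k \ge \beta$ at every iteration with $x_k \in \NBH$, together with the two-sided bound on $\|d_k\|$ supplied by the hypothesis. Combining these yields the basic dichotomy
\[
r_{k+1} \;\ge\; \omega\,\min\{\SPTtwo r_k,\ \|x_{k+1}-x_\star\|\},
\]
so at each step either (B) $r_{k+1} \ge \omega \SPTtwo r_k$ (the radius grows by a factor strictly greater than $1$ since $\omega\SPTtwo>1$), or (A) $r_{k+1} \ge \omega\|x_{k+1}-x_\star\|$ (the desired conclusion already holds at $k+1$).

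First I would handle the geometric-growth phase. Starting from $r_{i+1} = \omega\|d_i\|$, as long as branch (B) holds alone we have $r_{i+1+j} \ge (\omega\SPTtwo)^j\,\omega\|d_i\|$ after $j$ further iterations. Because $\|x_k - x_\star\| \le \diam(\NBH)$ for every $x_k \in \NBH$, this quantity will exceed $\omega\diam(\NBH) \ge \omega\|x_{i+1+j}-x_\star\|$ as soon as $j \ge \log_{\omega\SPTtwo}(\diam(\NBH)/\|d_i\|)$, which accounts for the logarithmic part of the stated threshold. If branch (A) instead activates at some earlier step $k$, we already have $r_{k+1} \ge \omega\|x_{k+1}-x_\star\|$ at that point.

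Next I would show the invariant $r_k \ge \omega\|x_k-x_\star\|$ is self-propagating so that it holds for every index beyond the threshold. The key observation is that whenever $r_k \ge \omega\|x_k-x_\star\|$, the upper bound $\|d_k\| \le \omega\SPTtwo\|x_k-x_\star\|$ immediately gives $\|d_k\| \le \SPTtwo r_k$; hence the Case-B option $\SPTtwo r_k \le \|d_k\|$ cannot hold with strict inequality and branch (A) must be active, yielding $r_{k+1} = \omega\|d_k\| \ge \omega\|x_{k+1}-x_\star\|$. Induction then carries the conclusion forward.

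The main obstacle I anticipate is the degenerate boundary case where $\|d_k\| = \SPTtwo r_k$ simultaneously with $\|x_{k+1}-x_\star\| > \|d_k\|$, since there the self-propagation step is tight and could momentarily lose the invariant. I expect the additive ``$+2$'' (rather than ``$+1$'') in the stated threshold is precisely the slack needed to absorb one such boundary step at the hand-off between the geometric-growth phase and the invariant-propagation phase: either the geometric growth has already pushed $r_k$ strictly past $\omega\diam(\NBH)$, or an extra step of (B) re-establishes strict slack before the invariant locks in permanently.
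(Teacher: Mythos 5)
Your proposal follows essentially the same route as the paper's proof: the forced update $r_{k+1}=\omega\|d_k\|$ from $\hat{\rho}_k\ge\beta$, an initial geometric-growth phase at rate $\omega\SPTtwo$ bounded via $\diam(\NBH)$ (yielding the logarithmic threshold), and an absorbing-state induction driven by the two-sided bound on $\|d_k\|$. The only differences are cosmetic --- the paper tracks the absorbing condition $\|d_k\|<\SPTtwo r_k$ and bounds $\|d_k\|\le\diam(\NBH)$ directly rather than invoking $\|x_k-x_\star\|\le\diam(\NBH)$ (which would implicitly require $x_\star\in\NBH$), and it glosses over the same boundary case $\|d_k\|=\SPTtwo r_k$ that you explicitly flag.
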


\begin{proof}
Let $k \ge i$. By induction $x_k \in \NBH$. By $\hat{\rho}_k \ge \beta$ and inspection of Algorithm~\ref{alg:SOAT} we have $r_{k+1} = \omega \| d_k \|$.
Suppose that $\| d_k \| \ge \SPTtwo r_{k}$ for all $i \le k \le i + \ell$ then $\diam(\NBH) \ge \| d_{k+1} \| \ge \SPTtwo r_{k+1} = \SPTtwo \omega \| d_k \| = \SPTtwo^{\ell} \omega^{\ell} \| d_i \|$. Rearranging gives $\ell \le \log_{\SPTtwo \omega}(\frac{\diam(\NBH)}{\| d_i \|})$. 
Next observe that if 
$\| d_{k} \| < \SPTtwo r_k$ then $\| d_{k+1} \| \le \omega \SPTtwo \| x_{k+1} - x_\star \| \le \omega \SPTtwo \| d_{k} \| = (\omega \SPTtwo/\omega) r_{k+1} = r_{k+1}$. By induction $\| d_k \| < \SPTtwo r_k$ for all $k > \ell$.
Finally,
observe that if $\| d_k \| < \SPTtwo r_k$ then $r_{k+1} = \omega \| d_k \| \ge \omega \| x_{k+1} - x_\star \|$.
\end{proof}

\section{Experimental results}\label{sec:experimental-results}

We test our algorithm on learning linear dynamical systems \cite{hardt2016gradient}, matrix completion \cite{zhuang2020power}, and the CUTEst test set \cite{orban-siqueira-cutest-2020}. 

Appendix~\ref{sec:experimental-full-results} contains the complete set of results from our experiments. Our method is implemented in an open-source Julia module \edit{available at \url{https://github.com/fadihamad94/CAT-NeurIPS}}. The implementation uses a factorization and eigendecomposition approach to solve the trust-region subproblems (i.e., satisfy \eqref{eq:subproblem-termination-criteria}). 
We perform our experiments using Julia 1.6 on a Linux virtual machine that has 8 CPUs and 16 GB RAM. \edit{The CAT code repository provides instructions for reproducing the experiments and detailed tables of results.}

For these experiments, the selection of the parameters (unless otherwise specified) is as follow: $r_1 = 1.0$, $\beta = 0.1$, $\theta = 0.1$, $\omega = 8.0$, $\SPTone = 0.0$, $\SPTtwo = 0.8$, and $\SPTthree = 1.0$.
When implementing Algorithm \ref{alg:SOAT} with some target tolerance $\epsilon$, we immediately terminate when we observe a point $x_k$ with $\|\grad f(x_k + d_k) \| \leq \epsilon$. This also includes the case when we check the inner termination criteria for the trust-region subproblem. 
The full details of the implementation are described in Appendix~\ref{sec:solving-tr-subproblem}.

\subsection{Learning linear dynamical systems}
We test our method on learning linear dynamical systems \cite{hardt2016gradient} to see how efficient our method compared to a trust-region solver. 
We synthetically generate an example with noise
both in the observations and also the evolution of
the system, and then recover the parameters
using maximum likelihood estimation.
Details are provided in Appendix~\ref{sec:experimental-learning-linear-dynamical-systems}.

On this problem we compare our algorithm with a Newton trust-region method that is available through the Optim.jl package \cite{mogensen2018optim}. 
The comparisons are summarized in Table \ref{table:learning-linear-dynamical-system}.

\begin{table}[tb]
\caption{Geometric mean for total number of iterations, and evaluations of the function and gradient, per solver on 60 randomly generated instances with $\epsilon = 10^{-5}$ termination tolerance. Failures counted as the maximum number of iterations ($10000$) when computing the geometric mean.}
\label{table:learning-linear-dynamical-system}
\centering
\begin{tabular}{llll}
\hline
& \textbf{\#iter} & \textbf{\#function} & \textbf{\#gradient} \\ \midrule
\textbf{Newton trust-region \cite{mogensen2018optim}} & 480.1 & 482.3 & 482.3\\
\textbf{Our method} & 308.1 & 309.6 & 309.6 \\
\textbf{$95 \%$ CI for ratio} & [1.24, 1.95] & [1.24, 1.94] & [1.24, 1.94]
\end{tabular}
\end{table}

\subsection{Matrix completion}
\edit{
We also demonstrate the effectiveness of our algorithm against a trust-region solver on the matrix completion problem. The matrix completion formulation can be written as the regularized squared error function of SVD model \cite[Equation 10]{zhuang2020power}. For our experiment, we use the public data set of Ausgrid, but we only use the data from a single substation.  Details are provided in Appendix \ref{sec:experimental-cmatrix-completion}.

Again we compare our algorithm with a Newton trust-region method \cite{mogensen2018optim}. The comparison are summarized in Table \ref{table:matrix-completion}.

\begin{table}[tb]
\caption{Geometric mean for total number of iterations per solver on 10 instances by randomly
generating the sampled measurements from the matrix D using data from Ausgrid with $\epsilon = 10^{-5}$ termination tolerance. Failures counted as the maximum number of iterations ($1000$) when computing the geometric mean.}
\label{table:matrix-completion}
\centering
\begin{tabular}{ll}
\hline
& \textbf{\#iter} \\ \midrule
\textbf{Newton trust-region \cite{mogensen2018optim}} & 1000\\
\textbf{Our method} & 216.4
\end{tabular}
\end{table}
}

\subsection{Results on CUTEst test set}\label{sec:CUTEst-results}

The CUTEst test set \cite{orban-siqueira-cutest-2020} is a
standard test set for nonlinear optimization algorithms.
\edit{To run the benchmarks we use \url{https://github.com/JuliaSmoothOptimizers/CUTEst.jl} (the License can be found at \url{https://github.com/JuliaSmoothOptimizers/CUTEst.jl/blob/main/LICENSE.md}).}
We will be comparing with the results for ARC
reported in \cite[Table~1]{cartis2011adaptiveI}.
\edit{As the benchmark CUTEst that we used has changed since \cite{cartis2011adaptiveI} was written we select only the problems in CUTEst that remain the same (some of the problem sizes have changed).
This gives 67 instances.
A table with our full results can be found in the results/CUTEst subdirectory in the git repository for this paper. }
Catris et.al \cite{cartis2011adaptiveI} report the results for three different implementations of their ARC algorithm. We limit our comparison to the ARC g-rule algorithm since it performs better than the other ARC approaches. We also run the Newton trust-region method from the Optim.jl package \cite{mogensen2018optim}.

Our algorithm is stopped as soon $\|\grad f(x_k + d_k)\|$ is smaller than $10 ^ {-5}$. 
% where for ARC, we used $\|\grad f(x_k)\|$. 
For the Newton trust-region method \cite{mogensen2018optim} we also used as a stopping criteria a value of $10 ^ {-5}$ for the gradient termination tolerance. We used $10000$ as an iteration limit and any run exceeding this is considered a failure. This choice of parameters is to be consistent with \cite{cartis2011adaptiveI}. 

As we can see from Table \ref{table:cutest-run-statistics}, our algorithm offers significant promise, requiring similar function evaluations (and therefore subproblem solves) to converge than the Newton trust-region of \cite{mogensen2018optim} and ARC, although the number of gradient evaluations is slightly higher than ARC. In addition, the comparison between these algorithms in term of total number of iterations and total number of gradient evaluations is summarized in Figure \ref{fig:performance-fraction-problems-solved}.

\begin{table}[tb]
\caption{Number of failures, geometric mean for total number of iterations and function and gradient evaluations per solver on 67 unconstrained instance from the CUTEst benchmark instances. Failures counted as the maximum number of iterations ($10000$) when computing the geometric mean.}
\label{table:cutest-run-statistics}
\centering
\begin{tabular}{lcccc}
\hline
\textbf{}                                               & \textbf{\#failures} & \textbf{\#iter} & \textbf{\#function} & \textbf{\#gradient} \\ \midrule
\textbf{Our method} & 3 & 41.5 & 44.4 & 44.4 \\
\textbf{ARC with the g-rule \cite{cartis2011adaptiveI}} & 1& 38.1 & 38.1 & 26.6 \\
\textbf{Newton trust-region \cite{mogensen2018optim}}  & 4  & 44.5 & 47.2 & 47.2               
\end{tabular}
\end{table}

\begin{figure}[tb]
\centering
\includegraphics[scale=0.3]{./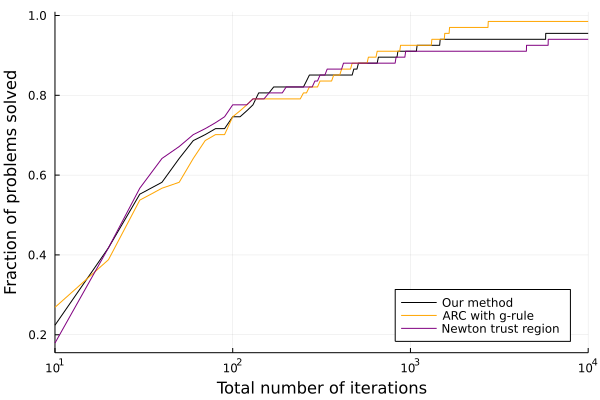}
\includegraphics[scale=0.3]{./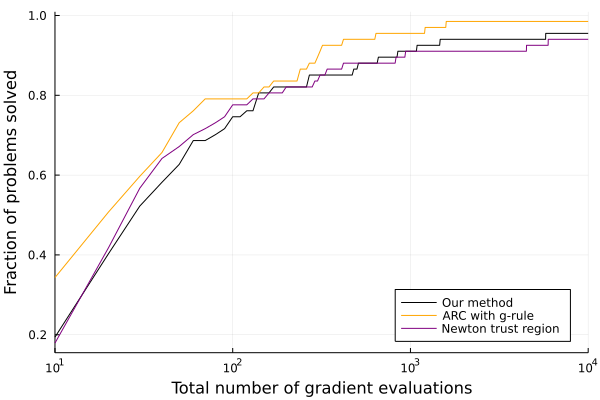}
\caption{Fraction of problems solved  on the CUTEst benchmark.
}
\label{fig:performance-fraction-problems-solved}
\end{figure} 

\subsection{Convergence of our method with different theta values}

To demonstrate the role of the additional $\frac{\theta}{2} \| d_k \| \| \grad f(x_k + d_k ) \|$ term
we run experiments with different $\theta$ values. In
particular, we contrast our default value of $\theta=0.1$
with $\theta=0$ which corresponds to not adding the
$\frac{\theta}{2} \| d_k \| \| \grad f(x_k + d_k ) \|$ term to $\hat{\rho}_k$ (recall the discussion in Section~\ref{sec:our-trust-region-method}).

In Figure~\ref{fig:different-theta:CUTESt} we rerun on the CUTEst test set (as per Section~\ref{sec:CUTEst-results}) and
compare these two options. One can see the algorithm
performs similarly with either $\theta = 0$ or
$\theta = 0.1$.

In Figure~\ref{fig:different-theta:hard-example} we test on the hard example from
\cite{cartis2010complexity} which is designed
to exhibit the poor worst-case complexity of 
trust-region methods (i.e., a convergence
rate proportional to $\epsilon^{-2}$) if
the initial radius $r_1$ is chosen sufficiently large
(to achieve this we set $r_1 = 1.5$).
We run this example with $\epsilon=10^{-3}$.
One can see that while for the first $\approx10^4$
iterations the methods follow identical trajectories,
thereafter $\theta=0.1$ rapidly finds a stationary
point whereas $\theta=0.0$ requires two orders of
magnitude more iterations to terminate.
This crystallizes the importance of $\theta$ in circumventing
the worst-case $\epsilon^{-2}$ convergence rate of trust-region methods.

\begin{figure}[tb]
\begin{center}
\begin{subfigure}{0.45\textwidth}
      \centering
      \includegraphics[scale=0.3]{./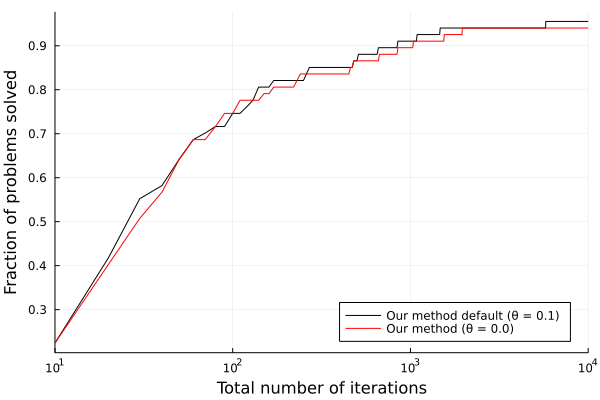}
      \caption{Fraction of problems solved versus total number of iterations on the CUTEst test set}\label{fig:different-theta:CUTESt}
    \end{subfigure}
    \begin{subfigure}{0.45\textwidth}
      \centering
    \includegraphics[scale=0.3]{./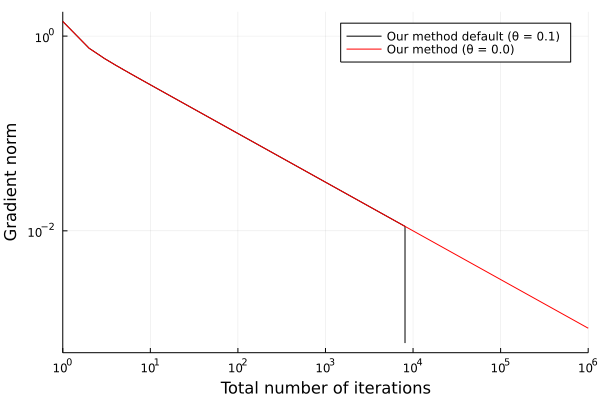}
      \caption{Gradient norm versus total number of iterations on the example of \cite{cartis2010complexity} with $r_1 = 1.5$}\label{fig:different-theta:hard-example}
    \end{subfigure}
\end{center}
\caption{Performance of our algorithm based on different $\theta$ values.}
\label{fig:different-theta}
\end{figure}

\begin{ack}
The authors were supported by the Pitt Momentum Funds.
The authors would like to thank Coralia Cartis and Nicholas Gould for their
helpful feedback on a draft of this paper.
The authors would also like to thank Xiaoyi Qu for finding errors in the proof of an early draft
of the paper and for helpful discussions.
\end{ack}

\newpage

\bibliographystyle{unsrt}
\bibliography{references.bib}

%%%%%%%%%%%%%%%%%%%%%%%%%%%%%%%%%%%%%%%%%%%%%%%%%%%%%%%%%%%%
\section*{Checklist}

\begin{enumerate}

\item For all authors...
\begin{enumerate}
  \item Do the main claims made in the abstract and introduction accurately reflect the paper's contributions and scope?
    \answerYes{}
  \item Did you describe the limitations of your work?
    \answerYes{The comparisons with other algorithms show the limitations of our work.}
  \item Did you discuss any potential negative societal impacts of your work?
    \answerNA{}
  \item Have you read the ethics review guidelines and ensured that your paper conforms to them?
    \answerYes{}{}
\end{enumerate}

\item If you are including theoretical results...
\begin{enumerate}
  \item Did you state the full set of assumptions of all theoretical results?
    \answerYes{See Sections \ref{sec:main-result} and \ref{sec:superlinear-results}}
        \item Did you include complete proofs of all theoretical results?
    \answerYes{See Sections \ref{sec:main-result} and  \ref{sec:superlinear-results}}
\end{enumerate}

\item If you ran experiments...
\begin{enumerate}
  \item Did you include the code, data, and instructions needed to reproduce the main experimental results (either in the supplemental material or as a URL)?
    \answerYes{Code and instructions to reproduce the results are attached in the supplementary materials.}
  \item Did you specify all the training details (e.g., data splits, hyperparameters, how they were chosen)?
    \answerNA{}
        \item Did you report error bars (e.g., with respect to the random seed after running experiments multiple times)?
    \answerYes{}
        \item Did you include the total amount of compute and the type of resources used (e.g., type of GPUs, internal cluster, or cloud provider)?
    \answerNA{}
\end{enumerate}

\item If you are using existing assets (e.g., code, data, models) or curating/releasing new assets...
\begin{enumerate}
  \item If your work uses existing assets, did you cite the creators?
    \answerYes{We use CUTEst.jl and Optim.jl. For both we cite the creators.}
  \item Did you mention the license of the assets?
    \answerYes{See Section~\ref{sec:CUTEst-results} and Appendix~\ref{sec:experimental-learning-linear-dynamical-systems}.}
  \item Did you include any new assets either in the supplemental material or as a URL?
    \answerYes{Our code is attached in the supplementary materials.}
  \item Did you discuss whether and how consent was obtained from people whose data you're using/curating?
    \answerNo{}{}
  \item Did you discuss whether the data you are using/curating contains personally identifiable information or offensive content?
    \answerNA{}
\end{enumerate}

\item If you used crowdsourcing or conducted research with human subjects...
\begin{enumerate}
  \item Did you include the full text of instructions given to participants and screenshots, if applicable?
    \answerNA{}
  \item Did you describe any potential participant risks, with links to Institutional Review Board (IRB) approvals, if applicable?
    \answerNA{}
  \item Did you include the estimated hourly wage paid to participants and the total amount spent on participant compensation?
    \answerNA{}
\end{enumerate}

\end{enumerate}

%%%%%%%%%%%%%%%%%%%%%%%%%%%%%%%%%%%%%%%%%%%%%%%%%%%%%%%%%%%%

\newpage
\appendix

\section{Proof of results from Section~\ref{sec:main-result}}

\subsection{Proof of Lemma~\ref{lem:gradient-bound_distance}}\label{sec:lem:gradient-bound_distance}

\begin{proof}
First we prove result in the case that $\| d_k \| < \SPTtwo r_k$. By \eqref{eq:comp-delta-radius1} the statement $\| d_k \| < \SPTtwo r_k$ implies $\delta_k = 0$. Combining $\delta_k = 0$ with \eqref{eq:model-gradient-weaker} and \eqref{eq:Lip-grad1} and using the fact $1 - \SPTone > 0$ yields
$$
\| \grad f(x_k + d_k) \| \le \frac{L}{2(1-\SPTone)}  \| d_k \|^2 \leq  c_1 L \| d_k \|^2 \ .
$$

Next we prove the result in the case that $ \hat{\rho}_k \le \beta$. Then
\begin{flalign*}
\Mk(d_k) + \frac{L}{6} \| d_k \|^3 &\ge  f(x_k + d_k) - f(x_k) = -\hat{\rho}_k  \left( -\Mk(d_k) + \frac{\theta}{2} \| \grad f(x_k + d_k) \| \| d_k \| \right) \\
&\ge -\beta \left( -\Mk(d_k) + \frac{\theta}{2} \| \grad f(x_k + d_k) \| \| d_k \|\right)
\end{flalign*}
where the the first inequality uses \eqref{eq:Lip-progress1}, the first equality uses the definition of $\hat{\rho}_k$, and the second inequality uses $\hat{\rho}_k \le \beta$ and $-\Mk(d_k) + \frac{\theta}{2} \| \grad f(x_k + d_k) \| \| d_k \| \ge 0$.

Rearranging the previous inequality using $1 - \beta > 0$ and then applying \eqref{eq:model-reduction-by-dist1} yields:
\begin{flalign}\label{eq:lemma13:0328j}
\frac{L}{3 (1 - \beta )} \| d_k \|^2 + \frac{\beta \theta}{1 - \beta } \| \grad f(x_k + d_k) \|
\ge -\frac{2 \Mk(d_k)}{\| d_k \|} \ge \SPTthree \delta_k \| d_k \|.
\end{flalign}
Now, by \eqref{eq:Lip-grad1},  \eqref{eq:model-gradient-weaker} and the triangle inequality, and \eqref{eq:lemma13:0328j} respectively:
 \begin{flalign*}
\| \grad f(x_k + d_k) \| &\le \| \grad \Mk(d_k) \| + \frac{L}{2} \| d_k \|^2  \le \delta_k \| d_k \| + \SPTone \| \grad f(x_k + d_k) \| + \frac{L}{2} \| d_k \|^2 \\
&\le L \left( \frac{1}{3 \SPTthree(1 - \beta)} + \frac{1}{2} \right) \| d_k \|^2 + \left( \frac{\beta \theta}{\SPTthree(1 - \beta)} + \SPTone \right) \| \grad f(x_k + d_k) \|.
\end{flalign*} Rearranging the latter inequality for $\| \grad f(x_k + d_k) \|$ and using $\frac{\beta \theta}{\SPTthree(1 - \beta)} + \SPTone < 1$ from the requirements of Algorithm~\ref{alg:SOAT} yields:
\begin{flalign*}
\| \grad f(x_k + d_k) \|
&\le  \frac{\frac{1}{3 \SPTthree (1 - \beta)} + \frac{1}{2}}{1 - \frac{\beta \theta}{\SPTthree(1 - \beta)} - \SPTone} L \| d_k \|^2 = \frac{2 + 3 \SPTthree (1 - \beta)}{6 (\SPTthree(1 - \SPTone) (1 - \beta) - \beta \theta)} L\| d_k \|^2 \\
&\le \frac{5 - 3\beta}{6 (\SPTthree(1 - \SPTone) (1 - \beta) - \beta \theta)} L\| d_k \|^2.
\end{flalign*}
\end{proof}

\subsection{Proof of Lemma~\ref{lem:increase-radius-index}}\label{sec:lem:increase-radius-index}

\begin{proof}
For conciseness let $m = \abs{ \mathcal{P}_\epsilon }$.
Suppose that the indices of $\mathcal{P}_\epsilon$ are ordered increasing value by a permutation function $\pi$, i.e., $\mathcal{P}_\epsilon = \{ \pi(i) : i \in [m] \}$ with $\pi(1) < \dots < \pi(m)$.
Then
\edit{
\begin{flalign*}
\Delta_f &\ge f(x_{\pi(1)}) - f(x_{\pi(m)}) = \sum_{i=1}^{m-1} f(x_{\pi(i)}) - f(x_{\pi(i+1)}) 
\end{flalign*}
where the first inequality uses the fact that $f(x_{\pi(i)})$ is non-increasing in $\pi(i)$ and $f(x_{\pi(i)}) \ge f_\star$ and the equality is simply the definition of the telescoping sum of $f(x_{\pi(m)}) - f(x_{\pi(1)})$. Therefore,

\begin{flalign*}
\Delta_f &\ge \sum_{i=1}^{m-1} f(x_{\pi(i)}) - f(x_{\pi(i+1)}) = \sum_{i=1}^{m-1} \hat{\rho}_{\pi(i)} \left( -\Mk({d_{\pi(i)}}) + \frac{\theta}{2} \| \grad f(x_{\pi(i)} +d_{\pi(i)}) \| \| d_{\pi(i)} \| \right) \\
&\ge \sum_{i=1}^{m-1} \beta \left( -\Mk(d_{\pi(i)}) + \frac{\theta}{2} \| \grad f(x_{\pi(i)} + d_{\pi(i)}) \| \| d_{\pi(i)} \| \right) \ge  \frac{\beta \theta}{2}  \sum_{i=1}^{m-1} \| \grad f(x_{\pi(i)} + d_{\pi(i)}) \| \| d_{\pi(i)} \|  \\
&\ge \frac{\epsilon \beta \theta}{2} (m-1) \dmin
\end{flalign*}
where the first equality uses the definition of $\hat{\rho}_{\pi(i)}$, the second inequality follows from $\hat{\rho}_{\pi(i)} \ge \beta$ for $\pi(i) \in \mathcal{P}_\epsilon$, the third inequality uses that $-\Mk(d_{\pi(i)}) \ge 0$, the final inequality uses that $\pi(i) \in \mathcal{P}_\epsilon$ implies that $\| \grad f(x_{\pi(i)} + d_{\pi(i)}) \| \ge \epsilon$ (by definition of $\pi(i) \in \mathcal{P}_\epsilon$) and $\dmin \le \| d_{\pi(i)} \|$ (due to Lemma~\ref{lem:bound-direction-sizes}).
}

Rearranging the latter inequality for $m$ using the fact that $\beta \theta \epsilon \dmin > 0$ and $\Delta_f \ge 0$ yields $m \le \frac{2 \Delta_f}{\beta \theta \epsilon \dmin} + 1 = \frac{\dmax}{\dmin \omega} + 1 = $ where the equalities use the definitions of $\dmax$ and $\dmin$.
\end{proof}

\subsection{Proof of Theorem~\ref{thm:main-fully-adaptive-result}}\label{sec:main-fully-adaptive-result}

\begin{proof}
Define:
\begin{flalign*}
n_j &:= \abs{\{ k \in \N : k \not\in \mathcal{P}_\epsilon, k < K_\epsilon, \ubar{k}_\epsilon < k \le j \}} \\
p_j &:= \abs{ \{ k \in \mathcal{P}_\epsilon : \ubar{k}_\epsilon < k \le j \}}.
\end{flalign*}
First we establish that
\begin{flalign}\label{eq:bound-n}
n_{\infty} \le p_{\infty} + \log_{\omega}\left( \max\left\{ \frac{\dmax}{\dmin}, 1 \right\} \right).
\end{flalign}
Consider the induction hypothesis that 
\begin{flalign}\label{eq:r-k-bound}
r_{k} \le r_{\ubar{k}_\epsilon} \omega^{p_k - n_k} 
\quad \forall k \in [\ubar{k}_\epsilon, K_\epsilon) \cap \N.
\end{flalign}
If $k = \ubar{k}_\epsilon$ then $p_k = n_k = 0$ and the hypothesis holds.
Suppose that the induction hypothesis holds for $k = j$.
Note that for all $j \in \N$ either $p_{j+1} = p_j + 1$ (and $n_{j+1} = n_j$) or $n_{j+1} = n_j + 1$ (and $p_{j+1} = p_j$).
If $p_{j+1} = p_j + 1$ then
$$
r_{j+1} = \| d_j \| \omega \le r_j \omega \le  r_{\ubar{k}_\epsilon} \omega^{p_j-n_j+1} =  r_{\ubar{k}_\epsilon} \omega^{p_{j+1}-n_{j+1}}.
$$
On the other hand, if $n_{j+1} = n_j + 1$ then
$$
r_{j+1} = \| d_j \| / \omega \le r_j / \omega \le r_{\ubar{k}_\epsilon} \omega^{p_j-n_j-1} = r_{\ubar{k}_\epsilon} \omega^{p_{j+1}-n_{j+1}}.
$$
Therefore by induction \eqref{eq:r-k-bound} holds.
By \eqref{eq:r-k-bound} and Lemma~\ref{lem:bound-direction-sizes},
$$
\dmin \le \dmax \omega^{p_k - n_k}
$$
which establishes \eqref{eq:bound-n}. 

By Lemma~\ref{lem:bound-direction-sizes} we have $\ubar{k}_\epsilon \le 1 + \log_{\SPTtwo \omega}(\max\{ 1,  \dmin / r_1, r_1 / \dmax \})$  and Lemma~\ref{lem:increase-radius-index} we have
$p_{\infty} \le \frac{\dmax}{\dmin \omega} + 1$;
using these inequalities in conjuction with \eqref{eq:bound-n} gives 
\begin{flalign*}
K_\epsilon &= \ubar{k}_\epsilon + p_{\infty} + n_{\infty}+1 \le \ubar{k}_\epsilon + 2 p_{\infty} + \log_{\omega}\left( \max\{ \dmax / \dmin \} \right) +1 \\
&\le  \log_{\omega \SPTtwo}(\max\{ 1, \dmin / r_1, r_1 / \dmax \}) + \frac{2 \dmax}{\dmin \omega} + \log_{\omega}( \max\{1, \dmax / \dmin \} ) + 3 \\
&\le \frac{2 \dmax}{\dmin \omega} + 2 \log_{\omega \SPTtwo}\left( \max\left\{ \frac{\dmax}{\dmin}, \frac{\dmin}{r_1}, \frac{r_1}{\dmax}, 1 \right\} \right) + 3 \\
&= c_2 \cdot \frac{\Delta_f L^{1/2}}{\epsilon^{-3/2}} + 2 \log_{\omega \SPTtwo}\left( \max\left\{ \frac{c_2 \omega}{2} \cdot \frac{\Delta_f L^{1/2}}{\epsilon^{3/2}}, \frac{\SPTtwo}{\omega c_1^{1/2}} \cdot \frac{\epsilon^{1/2}}{L^{1/2} r_1}, \frac{\beta \theta}{2 \omega} \cdot \frac{r_1 L^{1/2}}{\epsilon^{1/2}}, 1 \right\} \right) + 3
\end{flalign*}
where
$$
c_2 := \frac{4 c_1^{1/2} \omega}{\beta \theta \SPTtwo}
$$
is a problem-independent constant.
As $c_1, c_2, \omega, \beta, \theta, \SPTone, \SPTtwo$ and $\SPTthree$ are problem-independent constants (see the definition of $c_1$ in Lemma~\ref{lem:gradient-bound_distance} and the requirements of Algorithm~\ref{alg:SOAT}) the result follows.
\end{proof}

\section{Proof of Theorem~\ref{thm:main-superlinear-convergence-result}}\label{sec:proof-of-superlinear-convergence}

We first prove Theorem~\ref{thm:general-superlinear-convergence-result} and then reduce Theorem~\ref{thm:main-superlinear-convergence-result} 
to Theorem~\ref{thm:general-superlinear-convergence-result}.
The following fact will be useful.

% strongly convex set
\newcommand{\SCS}{C}

\begin{fact}[\cite{bubeck2014convex}]
If $f$ is $\alpha$-strongly convex and $\sm$-smooth on the set $\SCS$ (i.e., $\alpha \eye \preceq \grad^2 f (x) \preceq \sm \eye$ for all $x \in \SCS$) then 
\begin{flalign} 
\alpha \| x - x_\star \| \le \| \grad f(x) \| \le \sm \| x - x_\star \|
\end{flalign}
where $x_\star$ is any minimizer of $f$.
\end{fact}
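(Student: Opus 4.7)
The plan is to reduce both inequalities to a single representation of $\nabla f(x)$ as an averaged Hessian applied to $(x - x_\star)$, and then read off the bounds from the spectral inequalities on $\nabla^2 f$.

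First I would invoke the fundamental theorem of calculus applied to $t \mapsto \nabla f(x_\star + t(x-x_\star))$ on $[0,1]$ to write
\[
\nabla f(x) - \nabla f(x_\star) = \Bigl(\int_0^1 \nabla^2 f(x_\star + t(x-x_\star))\, dt \Bigr)(x-x_\star).
\]
Since $x_\star$ is a minimizer, $\nabla f(x_\star)=0$, so denoting the averaged Hessian by
\[
H := \int_0^1 \nabla^2 f(x_\star + t(x - x_\star))\, dt,
\]
we have $\nabla f(x) = H(x-x_\star)$.

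Next I would use the hypothesis $\alpha I \preceq \nabla^2 f(y) \preceq \sm I$ for all $y \in C$, together with the (implicit) assumption that the segment $\{x_\star + t(x-x_\star) : t \in [0,1]\}$ lies in $C$ (which follows from $C$ being convex, the standard setting for this fact). Integrating the Loewner inequalities $\alpha I \preceq \nabla^2 f(x_\star + t(x-x_\star)) \preceq \sm I$ over $t \in [0,1]$ preserves the ordering, yielding $\alpha I \preceq H \preceq \sm I$, and hence every eigenvalue of the symmetric matrix $H$ lies in $[\alpha, \sm]$.

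Finally, applying the operator-norm and minimum-singular-value bounds to $\nabla f(x) = H(x-x_\star)$ gives the upper bound
\[
\| \nabla f(x) \| = \| H(x-x_\star) \| \le \|H\|\,\|x-x_\star\| \le \sm\,\|x-x_\star\|
\]
and the lower bound
\[
\| \nabla f(x) \| = \| H(x-x_\star) \| \ge \sigma_{\min}(H)\,\|x-x_\star\| \ge \alpha\,\|x-x_\star\|,
\]
which is the desired conclusion. The only subtle point, and the one I would flag explicitly, is that the argument requires the straight segment from $x_\star$ to $x$ to remain inside $C$ so that the Hessian bound applies pointwise along it; this is automatic under the standard convention that $C$ is convex, but worth stating so that the integration step is justified.
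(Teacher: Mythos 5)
Your proof is correct: the averaged-Hessian representation $\nabla f(x) = H(x-x_\star)$ with $\alpha \eye \preceq H \preceq \sm \eye$ immediately yields both bounds, and you are right to flag that the argument needs the segment from $x_\star$ to $x$ to stay inside $\SCS$ (which holds here since $\SCS$ is a convex set containing $x_\star$). The paper states this as a Fact imported from the literature and supplies no proof of its own, so there is nothing to compare against; your argument is the standard one.
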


\begin{theorem}\label{thm:general-superlinear-convergence-result}
Suppose that $f$ is $L$-Lipschitz, $\grad f(x_\star) = 0$ and there exists $\alpha, \sm, t > 0$ such that $\alpha \eye \preceq \grad^2 f (x) \preceq \sm \eye$ for all $x \in \{x \in \R^{n} : \| x - x_\star \| \le t \}$.
Consider the set
$$
\SCS := \left\{ x \in \R^{n} : f(x) \le f(x_\star) +  \frac{2 \eta^2}{\alpha}, \| x - x_\star \| \le \eta  \right\}
$$
with \[ 
\eta = \min\left\{ t, \frac{\alpha^3 (1 - \SPTone)}{2 L \sm^2} \min\left\{ \frac{1}{2}, \omega \SPTtwo - 1 \right\}, \frac{3 (1 - \beta) \alpha }{\left(2 + 12 (1-\beta) \SPTone c_1\right)L\omega \SPTtwo}, \frac{(1-\beta) \alpha}{2 \omega \SPTtwo \beta \theta  L c_1} \right\} 
\]
then if $x_i \in \SCS$ then for $k \ge 2 + i + \log_{\SPTtwo \omega}(\frac{\eta}{\| d_i \|})$ we have
$$
\| x_{k+1} - x_\star \| \le \frac{2L \sm^2}{\alpha^3 (1 - \SPTone)} \| x_{k} - x_\star \|^2.
$$
\end{theorem}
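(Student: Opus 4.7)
The overall plan is to apply Lemma~\ref{lem:superlinear-big-r-k} with $\NBH = \SCS$ to conclude $r_k \ge \omega \| x_k - x_\star \|$ for $k$ sufficiently large, and then exploit the inactivity of the trust-region constraint on such iterates so that $d_k$ reduces to an approximate Newton direction. Combined with $\alpha$-strong convexity and the $L$-Lipschitz Hessian assumption, this will yield the desired quadratic bound $\|x_{k+1}-x_\star\| \le \frac{2L\sm^2}{\alpha^3(1-\SPTone)}\|x_k-x_\star\|^2$.

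The first task is verifying the hypotheses of Lemma~\ref{lem:superlinear-big-r-k} on $\SCS$. Invariance of $\SCS$ holds for the function-value constraint since $f(x_{k+1}) \le f(x_k)$ by Algorithm~\ref{alg:SOAT}, and the distance constraint then follows by combining that function-value bound with $\alpha$-strong convexity on $\{x : \|x - x_\star\| \le t\}$ (valid because $\eta \le t$). To verify $\hat{\rho}_k \ge \beta$, I would use \eqref{eq:Lip-progress1} to reduce the goal to the sufficient condition
$$(1-\beta)(-\Mk(d_k)) \;\ge\; \frac{L}{6}\|d_k\|^3 + \frac{\beta\theta}{2}\|\grad f(x_k+d_k)\|\|d_k\|.$$
Taking the inner product of \eqref{eq:model-gradient-weaker} with $d_k$ and invoking $\grad^2 f(x_k) \succeq \alpha\eye$ gives $-\Mk(d_k) \ge \frac{\alpha}{2}\|d_k\|^2 - \SPTone \|\grad f(x_k+d_k)\|\|d_k\|$, while \eqref{eq:Lip-grad1} together with \eqref{eq:model-gradient-weaker} bounds $\|\grad f(x_k+d_k)\|$ by a linear combination of $\delta_k\|d_k\|$ and $L\|d_k\|^2$; the smallness of $\|d_k\|$ enforced by the third bound on $\eta$ absorbs the resulting $\|d_k\|^3$ terms. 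For the upper bound $\|d_k\| \le \omega \SPTtwo\|x_k-x_\star\|$, I would apply $\grad^2 f(x_k)+\delta_k\eye \succeq \alpha\eye$ to \eqref{eq:model-gradient-weaker} to obtain $\|d_k\| \le \frac{1}{\alpha}\bigl(\|\grad f(x_k)\| + \SPTone\|\grad f(x_k+d_k)\|\bigr)$ and then use smoothness $\|\grad f(x_k)\| \le \sm\|x_k-x_\star\|$, with the fourth bound on $\eta$ absorbing the $\SPTone$-dependent error. For the lower bound $\min\{\SPTtwo r_k,\|x_{k+1}-x_\star\|\} \le \|d_k\|$, I would split on whether $\delta_k>0$ (in which case $\SPTtwo r_k \le \|d_k\|$ follows directly from \eqref{eq:comp-delta-radius1}) or $\delta_k=0$ (in which case Newton-step accuracy gives $\|x_{k+1}-x_\star\| \le \|d_k\|$).

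With the hypotheses in hand, Lemma~\ref{lem:superlinear-big-r-k} yields $r_k \ge \omega\|x_k-x_\star\|$ for all $k \ge 2 + i + \log_{\SPTtwo\omega}(\diam(\SCS)/\|d_i\|)$, and since $\diam(\SCS) \le 2\eta$ the threshold agrees with the stated one up to an absolute constant. In this regime, $\|d_k\| \le \omega\SPTtwo\|x_k-x_\star\| < r_k$ forces $\delta_k=0$ via \eqref{eq:comp-delta-radius1}, so \eqref{eq:Lip-grad1} together with \eqref{eq:model-gradient-weaker} gives $\|\grad f(x_{k+1})\| \le \frac{L}{2(1-\SPTone)}\|d_k\|^2$; $\alpha$-strong convexity then delivers $\|x_{k+1}-x_\star\| \le \frac{1}{\alpha}\|\grad f(x_{k+1})\|$, and chaining with a refined form of the upper bound above, namely $\|d_k\| \le \frac{2\sm}{\alpha}\|x_k-x_\star\|$ (valid once the $\SPTone$-error is absorbed), produces the claimed contraction. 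The main obstacle is carefully tracking the $\SPTone$, $\theta$, and $\beta$-dependent error terms so that the four-way minimum defining $\eta$ is tight enough to absorb them: each arm of the $\min$ plays a distinct role, namely keeping iterates inside the strongly-convex ball $\{\|x-x_\star\|\le t\}$, guaranteeing at least a factor-$1/2$ contraction to close the invariance argument for $\SCS$, achieving $\hat{\rho}_k \ge \beta$, and controlling the Newton-step length so that the upper bound on $\|d_k\|$ in Lemma~\ref{lem:superlinear-big-r-k} holds.
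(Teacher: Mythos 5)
Your overall architecture matches the paper's: verify the three premises of Lemma~\ref{lem:superlinear-big-r-k} on $\SCS$, invoke it to get $r_k \ge \omega\|x_k-x_\star\|$ (hence $\delta_k=0$) after the stated warm-up, and then read off the quadratic contraction from the approximate-Newton accuracy bound $\|\grad f(x_k+d_k)\| \le \frac{L}{2(1-\SPTone)}\|d_k\|^2$ combined with strong convexity. The invariance argument, the $\hat{\rho}_k\ge\beta$ argument, and the lower-bound case split on $\|d_k\|<\SPTtwo r_k$ versus not are all essentially the paper's.

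However, there is a genuine gap in your verification of the upper bound $\|d_k\| \le \omega\SPTtwo\|x_k-x_\star\|$. Your route gives $\alpha\|d_k\| \le \|(\grad^2 f(x_k)+\delta_k\eye)d_k\| \le \|\grad f(x_k)\| + \SPTone\|\grad f(x_k+d_k)\|$ and then $\|\grad f(x_k)\|\le \sm\|x_k-x_\star\|$, so the \emph{leading} coefficient you obtain is $\sm/\alpha$, the condition number. The target coefficient $\omega\SPTtwo$ is a fixed problem-independent constant only slightly larger than $1$ (recall $\SPTtwo\in(1/\omega,1]$), so $\sm/\alpha \le \omega\SPTtwo$ fails for any even mildly ill-conditioned problem. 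No arm of the four-way minimum defining $\eta$ can repair this, because shrinking $\eta$ only controls additive higher-order error terms, not a multiplicative constant in the leading linear term. The same issue infects your "refined" bound $\|d_k\|\le\frac{2\sm}{\alpha}\|x_k-x_\star\|$: that one is fine for producing the final constant $\frac{2L\sm^2}{\alpha^3(1-\SPTone)}$, but it is not strong enough to feed Lemma~\ref{lem:superlinear-big-r-k}, whose hypothesis genuinely needs the coefficient $\omega\SPTtwo$ so that the radius recursion $r_{k+1}=\omega\|d_k\|$ stays ahead of $\|x_{k+1}-x_\star\|$.

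The paper avoids this by a different decomposition: it introduces the regularized minimizer $\hat{x}_k := \argmin_x f(x)+\frac{\delta_k}{2}\|x-x_k\|^2$ and shows $\|x_k+d_k-\hat{x}_k\| \le \frac{2L\sm^2}{\alpha^3(1-\SPTone)}\|x_k-x_\star\|^2$, which the second arm of $\eta$ makes at most $(\omega\SPTtwo-1)\|x_k-x_\star\|$. Since $\hat{x}_k$ minimizes $f(x)+\frac{\delta_k}{2}\|x-x_k\|^2$ one gets the condition-number-free estimate $\|\hat{x}_k-x_k\|\le\|x_k-x_\star\|$, and the triangle inequality yields $\|d_k\|\le\|x_k+d_k-\hat{x}_k\|+\|\hat{x}_k-x_k\|\le\omega\SPTtwo\|x_k-x_\star\|$ with leading coefficient exactly $1$ plus a controllable remainder. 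You would need to replace your direct bound with an argument of this type (or something equivalent that decouples the leading coefficient from $\sm/\alpha$) for the proof to go through.
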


\begin{proof}
We begin by establishing the premise of
Lemma~\ref{lem:superlinear-big-r-k}.
First we establish $x_k \in \SCS \implies x_{k+1} \in \SCS$.
Suppose that $x_k \in \SCS$ then $f(x_{k+1}) \le f(x_k) \le f(x_\star) + \frac{2 \eta^2}{\alpha}$. By strong convexity we get $x_{k+1} \in \SCS$.

Next we establish that
$\min\{ \SPTtwo r_{k}, \| x_{k+1} - x_\star \| \} \le \| d_k \| \le  \omega \SPTtwo \| x_{k} - x_\star \|$.
By strong convexity and \eqref{eq:model-reduction-by-dist1} we have
\[
\frac{\alpha + \delta_k}{2} \| d_k \|^2 - \| \grad f(x_k) \| \| d_k \| \le \Mk(d_k) \le 0
\]
which implies $\| d_k \| \le \frac{2\| \grad f(x_k) \|}{\alpha + \delta_k}$.
Furthermore, by \eqref{eq:Lip-grad1}, \eqref{eq:model-gradient-weaker} and $\| d_k \| \le \frac{2\| \grad f(x_k) \|}{\alpha + \delta_k}$ we have
\[
\| \grad f(x_k+d_k) + \delta_k d_k \| \le \| \grad \Mk(d_k) + \delta_k d_k \| + \frac{L}{2} \| d_k \|^2 \le \SPTone \| \grad f(x_k + d_k) \| + \frac{2 L \| \grad f(x_k) \|^2}{\alpha^2}
\]
which after rearranging
\begin{flalign} 
\| \grad f(x_k+d_k) + \delta_k d_k \| \le \frac{2L}{\alpha^2 (1 - \SPTone)} \| \grad f(x_k) \|^2
\end{flalign}
By strong convexity and smoothness,
\begin{flalign}\label{superlinear-to-prox}
\| x_{k} + d_k - \hat{x}_k \| \le \frac{2L \sm^2}{\alpha^3 (1 - \SPTone)} \| x_{k} - x_\star \|^2
\end{flalign}
where $\hat{x}_k := \min f(x) + \frac{\delta_k}{2} \| x - x_k \|^2$.
Therefore, as $\| x_k - x_\star \| \le \frac{\alpha^3 (1 - \SPTone)}{2 L \sm^2} \min\left\{ \frac{1}{2}, \omega \SPTtwo - 1 \right\}$,
\[
\| x_{k} + d_k - \hat{x}_k \| \le \min\left\{ \frac{1}{2}, \omega \SPTtwo - 1 \right\} \| x_{k} - x_\star \|  
\]
which combined with the triangle inequality and $\| \hat{x}_k - x_k \| \le \| x_k - x_\star \|$ gives
\[
\| d_k \| \le \| x_k + d_k - \hat{x}_k \| + \| x_k - \hat{x}_k \| \le \omega \SPTtwo \| x_k - x_\star \|
\]
Furthermore, if $\| d_k \| < \SPTtwo r_k$ then by \eqref{eq:comp-delta-radius1} we have $\delta_k = 0$ and $\hat{x}_k = x_\star$ which gives
\begin{flalign*}
\| x_k + d_k - x_\star \| \le \frac{1}{2} \| x_k - x_\star \| \le \| x_k - x_\star \| - \| x_k + d_k - x_\star \| \le \| d_k \|.
\end{flalign*} 

Next we show $x_k \in \SCS$ implies $\hat{\rho}_k \ge \beta$.
To obtain a contradiction we assume
$\hat{\rho}_k < \beta$, by the definition of the model,
\eqref{eq:model-gradient-weaker}, strong convexity, and \eqref{eq:grad-bound-distance1} we get
\begin{flalign*} 
\Mk(d_k) &= \frac{1}{2} d_k^T \grad^2 f(x_k) d_k + \grad f(x_k)^T d_k = d_k^T (\grad^2 f(x_k) d_k + \delta_k d_k + \grad f(x_k)) - \frac{1}{2} d_k^T (\grad^2 f(x_k) + 2 \delta_k \eye ) d_k \\
&\le \SPTone \| d_k \| \| \grad f(x_{k} + d_k) \| - \frac{1}{2} d_k^T (\grad^2 f(x_k) + 2 \delta_k \eye ) d_k \\
&\le \SPTone \| d_k \| \| \grad f(x_{k} + d_k) \| - \frac{\alpha}{2} \| d_k \|^2 \\
&\le \SPTone c_1 L \| d_k \|^3 - \frac{\alpha}{2} \| d_k \|^2.
\end{flalign*}
It follows that by inequality \eqref{eq:Lip-progress1}, $\| d_k \| \le \omega \SPTtwo \| x_k - x_\star \| \le  \frac{3 (1 - \beta) \alpha }{\left(2 + 12 (1-\beta) \SPTone c_1\right)L}$, inequality \eqref{eq:grad-bound-distance1}, $\| d_k \| \le \omega \SPTtwo \| x_k - x_\star \| \le  \frac{(1-\beta) \alpha}{2 \beta \theta L c_1}$ we have
\begin{flalign*}
f(x_k) - f(x_{k+1}) &\ge  -\Mk(d_k) - \frac{L}{6} \| d_k \|^3 \\
&\ge -\beta \Mk(d_k) + (\beta - 1) \Mk(d_k)
- \frac{L}{6} \| d_k \|^3 \\
&\ge -\beta \Mk(d_k) + \frac{(1-\beta) \alpha}{2} \| d_k \|^2 +  (\beta - 1) \SPTone c_1 L \| d_k \|^3 - \frac{L}{6} \| d_k \|^3 \\
&\ge -\beta \Mk(d_k) + \frac{(1-\beta) \alpha}{2} \| d_k \|^2 -L \|d_k\| ^ 3 \left(\frac{1 + 6 (1 - \beta) \SPTone c_1}{6}\right)\\
&\ge -\beta \Mk(d_k) + \frac{(1-\beta) \alpha}{2} \| d_k \|^2 - \frac{(1-\beta) \alpha}{4} \| d_k \|^2 \\
&\ge -\beta \Mk(d_k) + \frac{(1-\beta) \alpha}{4} \| d_k \|^2 \\
&\ge -\beta \Mk(d_k) + \frac{(1-\beta) \alpha}{4 L c_1} \| \grad f(x_k + d_k) \| \\
&\ge \beta \left(-\Mk(d_k) + \frac{\theta}{2} \|\grad f(x_k + d_k) \|d_k \| \right)
\end{flalign*}

which after rearranging gives:
		
    \[
    \hat{\rho}_k = \frac{f(x_k) - f(x_k + d_k)}{-\Mk(d_k) + \frac{\theta}{2} \|\grad f(x_k + d_k)  \| d_k \|} \ge \beta
    \]

which gives our desired contradiction.

With the premise of Lemma~\ref{lem:superlinear-big-r-k} established we conclude that for $k \ge 2 + i + \log\left( \eta / \| d_i \| \right)$ we have $\delta_k = 0$ and therefore by \eqref{superlinear-to-prox} we get the desired result.
\end{proof}

The following Lemma is a standard result but we include it for completeness.

\begin{lemma}\label{lem:neigh-exists-strongly-convex}
If $\grad^2 f(x_\star)$ is twice differentiable and positive definite, then there exists a neighborhood $\NBH$ and positive constants $\alpha, \beta > 0$ such that $\alpha \eye \preceq \grad^2 f(x) \preceq \sm \eye$ for all $x \in \NBH$.
\end{lemma}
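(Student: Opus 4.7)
The plan is a standard continuity-plus-eigenvalue-perturbation argument. Since $\grad^2 f(x_\star)$ is symmetric and positive definite, let $\lambda_{\min} := \lambda_{\min}(\grad^2 f(x_\star)) > 0$ and $\lambda_{\max} := \lambda_{\max}(\grad^2 f(x_\star)) < \infty$ denote its extreme eigenvalues. I would set $\alpha := \lambda_{\min}/2$ and $\sm := \lambda_{\max} + \lambda_{\min}/2$, so that any symmetric matrix whose eigenvalues lie within $\lambda_{\min}/2$ of the corresponding eigenvalues of $\grad^2 f(x_\star)$ automatically satisfies $\alpha \eye \preceq H \preceq \sm \eye$.

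Next, I would invoke continuity of $\grad^2 f$ at $x_\star$ (this follows from the twice-differentiability hypothesis together with the ambient Lipschitz Hessian assumption used throughout Section~\ref{sec:superlinear-results}): there exists $r > 0$ such that
\[
\| \grad^2 f(x) - \grad^2 f(x_\star) \| \le \lambda_{\min}/2 \quad \text{for all } x \in \NBH := \{x \in \R^{n} : \| x - x_\star \| < r\}.
\]
In fact, if one wants to be fully explicit one can simply take $r := \lambda_{\min}/(2L)$ and use the Lipschitz bound $\| \grad^2 f(x) - \grad^2 f(x_\star) \| \le L \| x - x_\star \|$ directly, avoiding any appeal to abstract continuity.

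Finally, I would apply Weyl's inequality: for symmetric matrices $A$ and $B$, every eigenvalue of $A$ is within $\| A - B \|$ of the corresponding eigenvalue of $B$. Applying this with $A = \grad^2 f(x)$ and $B = \grad^2 f(x_\star)$ for $x \in \NBH$ places every eigenvalue of $\grad^2 f(x)$ inside $[\lambda_{\min}/2,\; \lambda_{\max} + \lambda_{\min}/2] = [\alpha, \sm]$, which is exactly the desired sandwich $\alpha \eye \preceq \grad^2 f(x) \preceq \sm \eye$.

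There is no real obstacle here. The only point that deserves any care is making sure the hypothesis is strong enough to yield continuity of the Hessian (pointwise twice differentiability alone would not suffice in a pathological setting), but since the statement is invoked in the context of Theorem~\ref{thm:main-superlinear-convergence-result} where $\grad^2 f$ is Lipschitz, this is immediate and in fact gives an explicit quantitative radius $r$.
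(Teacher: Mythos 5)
Your proof is correct and follows essentially the same route as the paper's: both arguments combine the positive-definiteness of $\grad^2 f(x_\star)$ with continuity (or local Lipschitz continuity) of $\grad^2 f$ near $x_\star$ to perturb the eigenvalue bounds into a slightly smaller/larger sandwich on a sufficiently small ball. The only cosmetic difference is that the paper first derives a \emph{local} Lipschitz constant for $\grad^2 f$ from the differentiability of the Hessian (rather than citing an ambient global Lipschitz assumption, which Theorem~\ref{thm:main-superlinear-convergence-result} does not actually posit), and leaves the eigenvalue-perturbation step implicit where you invoke Weyl's inequality explicitly.
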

\begin{proof} 
As $\grad^2 f$ is twice differentiable and the
fact that continuous functions on 
compact sets are bounded we conclude
that there exists a neighborhood $\NBH$ around $x_\star$ that $\grad^2 f$ is $L$-Lipschitz for some constant $L \in (0,\infty)$.
Then by using the fact that there exists positive constants $\alpha', \beta' \in (0,\infty)$ \text{ s.t. } $\alpha' \eye \preceq \grad^2 f(x_\star) \preceq \beta' \eye$ we conclude for sufficiently small ball around $x_\star$ we have $\alpha' / 2 \eye \preceq \grad^2 f(x) \preceq 2 \beta' \eye$ for all $x$ in a sufficiently small neighborhood $\NBH' \subseteq \NBH$.
\end{proof}

\begin{proof}[Proof of Theorem~\ref{thm:main-superlinear-convergence-result}]
Follows by Lemma~\ref{lem:neigh-exists-strongly-convex} and Theorem~\ref{thm:general-superlinear-convergence-result}.
\end{proof}

\section{Solving trust-region subproblem}\label{sec:solving-tr-subproblem}

In this section, we detail our approach to solve the trust-region subproblem. We first attempt to take a Newton's step by checking if $\grad^2 f(x_k) \succeq 0$ and $\|\grad^2 f(x_k) ^ {-1} \grad f(x_k)\| \leq r_k$. However, if that is not the case, then the optimally conditions mentioned in \eqref{eq:subproblem-termination-criteria}, will be a key ingredient in our  approach to find $\delta$ and hence $d_k(\delta)$. Based on these optimally conditions, we will define a univariate function $\phi$ that we seek to find its root at each iteration. In our implementation we use $\SPTthree = 1.0$ for \eqref{eq:model-reduction-by-dist1}  which is the same as satisfying \eqref{eq:PSD-grad-squared-f-delta}. The function $\phi$ is defined as bellow:

$$\phi(\delta) := \begin{cases}
	-1, & \text{if $\grad^2 f(x_k) + \delta \eye \nsucceq 0 \text{ or } \|d_k (\delta) \| >  r_k $}\\
	+1, & \text{if $\grad^2 f(x_k) + \delta \eye \succeq 0 \And \|d_k (\delta) \| < \SPTtwo r_k$} \\
    0, & \text{if $\grad^2 f(x_k) + \delta \eye \succeq 0 \And \|d_k (\delta)\| \leq r_k$}
\end{cases}$$

where:
$$d_k (\delta) := (\grad^2 f(x_k) + \delta \eye) ^ {-1} (-\grad f(x_k))$$

When we fail to take a Newton's step, we first find an interval $[\delta, \delta']$ such that $\phi(\delta) \times \phi(\delta ') \leq 0$. Then we apply bisection method to find $\delta_k$ such that $\phi(\delta_k) = 0$. In case our root finding logic failed,

then we use the approach from the hard case section under chapter 4 "Trust-Region Methods" in \cite{nocedal2006numerical} to find the direction $d_k$.

\edit{The logic to find the interval $[\delta, \delta']$ is summarized as follow. We first compute $\phi(\delta)$ using the $\delta$ value from the previous iteration. Then we search for $\delta'$ by starting with $\delta' = 2 \delta$. We compute   $\phi(\delta')$ and in the case $\phi(\delta') < 0$, we update $\delta'$ to become twice its current value, otherwise if $\phi(\delta') > 0$, we update $\delta'$ to become half its current value. We keep repeating this logic until we get a $\delta'$ such that $\phi(\delta) \times \phi(\delta ') \leq 0$ or until we reach the maximum iteration limit which is marked as a failure}. 

The whole approach is summarized in Algorithm \ref{alg:TRS}:

\begin{algorithm}[H]
\If{$\grad^2 f(x_k) \succeq 0 $}{
    $d_k = - \grad^2 f(x_k) ^ {-1} \grad f(x_k)$
    
    \uIf{$ \|d_k\| \leq r$}{
        \Return $d_k$\;
    }
}
\uIf{hard case}{
Find $d_k$ using \cite[pages~87-88]{nocedal2006numerical} \;

\Return $d_k$
}
\Else{
Find initial interval $[\delta, \delta']$ using the $\phi$ function such that $\phi(\delta) \times \phi(\delta') \leq 0$ \; 

Use bisection method to find $\delta_k$ such that $\phi(\delta_k) = 0$ \; 

\Return $d_k (\delta_k)$
}
\caption{trust-region subproblems solver}
\label{alg:TRS}
\end{algorithm}

\section{Experimental results details \label{sec:experimental-full-results}}

\subsection{Learning linear dynamical systems \label{sec:experimental-learning-linear-dynamical-systems}}
The time-invariant linear dynamical system is defined by:
\begin{flalign*}
h_{t+1} &= A h_t + B u_t + \xi_t \\
x_t &= h_{t} + \vartheta_t
\end{flalign*}
where the vectors $h_t$ and $x_t$ represent the hidden and observed state of the system at time $t$. Here  $u_t, \vartheta_t \sim N(0,1 )^ d$, 

$\xi_t \sim N(0, \sigma) ^ d$ and $A \text{ and } B$ are linear transformations.

The goal is to recover the parameters of the system using maximum likelihood estimation and hence we formulate the problem as follow:
$$\min_{A,B,h} \sum_{t=1}^T \frac{\|h_{t+1} - A h_t - B u_t\| ^ 2}{\sigma ^ 2} +  \| x_t - h_t \| ^ 2$$

We synthetically generate examples with noise both in the observations and also the evolution of the system. The entries of the matrix $B$ are generated using a Normal distribution $N(0, 1)$. For the matrix $A$, we first generate a diagonal matrix $D$ with entries drawn from a uniform distribution $U[0.9,0.99]$ and then we construct a random orthogonal matrix $Q$ by randomly sampling a matrix $W \sim N(0,1)^{d \times d}$ and then performing an QR factorization. Finally using the matrices $Q$ and $D$, we define $A$:
$$
A = Q^T D Q
$$
We compare our method against the Newton trust-region method available through the Optim.jl package \cite{mogensen2018optim} licensed under \url{https://github.com/JuliaNLSolvers/Optim.jl/blob/master/LICENSE.md}. In the results/learning problem subdirectory in
the git repository, 

we present the full results of running our experiments on 60 randomly generated instances with $T = 50$, $d = 4$, and $\sigma = 0.01$ where we used a value of $10 ^ {-5}$ for the gradient termination tolerance. 

This experiment was performed on a MacBook Air (M1, 2020) with 8GB RAM.

\subsection{Matrix completion \label{sec:experimental-cmatrix-completion}}
\edit{
The original power consumption data is denoted by a matrix $D \in R ^ {n_1 \times n_2}$ where $n_1$ represents the number of measurements taken per day within a 15 mins interval and $n_2$ represents the number of days. Part of the data is missing, hence the goal is to recover the original data. The set $\Omega = \{(i, j) | D_{i,j} \text{ is observed}\}$ denotes the indices of the observed data in the matrix $D$.

We decompose $D$ as a product of two matrices $P \in R ^ {n_1 \times r}$ and $Q \in R ^ {n_2 \times r}$ where $r < n_1$ and $r < n_2$: 

\begin{flalign*}
D = P Q^T.
\end{flalign*}

To account for the effect of time and day on the power consumption data , we use a baseline estimate \cite{koren2008factorization}:

\begin{flalign*}
d_{i, j} = \mu + r_i + c_j
\end{flalign*}

where $\mu$ denotes the mean for all observed measurements, $r_i$ denotes the observed deviation during time $i$, and $c_j$ denotes the observed deviation during day $j$ \cite{zhuang2020power,koren2008factorization}.

We formulate the matrix completion problem as the regularized squared error function of SVD model \cite[Equation 10]{zhuang2020power}:

\begin{flalign*}
\min_{r,c,p,q} \sum_{(i,j)\in \Omega} (D_{i, j} - \mu - r_i - c_j - p_i q_j ^ T) + \lambda_1 (r_i ^ 2 + c_j ^ 2) + \lambda_2 (\|p_i\|_2 ^ 2 + \|q_j\|_2 ^ 2)
\end{flalign*}

We use the public data set of Ausgrid, but we only use the data from a single substation (the Newton trust-region method \cite{mogensen2018optim} is very slow for this example so testing it on all substations takes a prohibitively long time). 
We limit our option to 30 days and 12 hours measurements i.e the matrix D is of size $48 \times 30$ because with a larger matrix size, the Newton trust-region \cite{mogensen2018optim} was always reaching the iterations limit.

We compare our method against Newton trust-region algorithm available through the Optim.jl package \cite{mogensen2018optim} licensed under \url{https://github.com/JuliaNLSolvers/Optim.jl/blob/master/LICENSE.md}. In the results/matrix completion subdirectory in the git repository,

we include the full results of running our experiments on 10 instances by randomly generating the sampled measurements from the matrix $D$ with the same values for the  regularization parameters as in \cite{zhuang2020power} where we used a value of $10 ^ {-5}$ for the gradient termination tolerance. 

This experiment was performed on a MacBook Air (M1, 2020) with 8GB RAM.
}

\end{document}